\newtheorem{theorem}{Theorem}[section]
\newtheorem*{theorem*}{Theorem}
\newtheorem{corollary}[theorem]{Corollary}
\newtheorem{definition}[theorem]{Definition}
\newtheorem{lemma}[theorem]{Lemma}
\newtheorem{example}[theorem]{Example}
\newcommand{\integrald}{\text{d}}
\newcommand{\KL}{\mathtt{KL}}
\author{Piotr Kamieński}
\title{A quantitative version of the theorem on Khintchine's constant}
\begin{document}


\begin{abstract}
 In the paper we provide measure estimates for the set of numbers whose sequence of products of continued fraction partial quotients $M_n = a_1 \ldots a_n$ has exponential growth with rate close to the one predicted by Khintchine's theorem, i.e. for which
 \begin{equation*}
  e^{(\kappa - T)n} \leqslant M_n \leqslant e^{(\kappa + T)n}
 \end{equation*}
 for a fixed $T > 0$ and all $n$ greater than some fixed integer $N$, where $e^\kappa = 2.685\ldots$ is the Khintchine constant. Choosing $N$ large enough the measure can be made arbitrarily close to full, for any given $T$. The bounds are not of asymptotic nature, but explicit in terms of the parameters involved. In the proof we compile several known result of large deviations theory, employing the cumulant method in particular. We also discuss the numerical values of the quantities involved.
\end{abstract}

\maketitle

\section{Motivation}\label{sec:motivation}

Diophantine (and Brjuno) numbers\footnote{see definition \ref{def:diophantine-number} for details} are commonly used in small divisors problems (\cite{arnold:1961-small-denominators-1, russmann:brjuno-paper, yoccoz:circle}). In KAM theory, for instance, if the frequency $\omega$ of an invariant torus is Diophantine then this torus survives once a perturbation is introduced - this happens for small enough perturbation parameter values and under the additional twist condition. The term ``small enough'', however, if specified precisely by the KAM-type theorem usually means ``smaller than some explicit formula depending on the Diophantine constant $C$ and exponent $\tau$''(as in e.g. \cite{delallave:kam-without}). The problem with this approach appears when we consider a family of tori with varying frequencies. Changing $\omega$ by replacing either its first few decimal digits or continued fraction partial quotients does not change $\tau$, but might decrease $C$ quite significantly, lowering the applicability threshold of the theorem through a multiplicative correction.

We propose an alternative to the Diophantine condition, what we call the \emph{Khintchine-L\'evy condition} (or $\KL-$condition for short) to account for this disadvantage. In the present paper we give the definition of the Khintchine-L\'evy numbers and prove that the set of all those numbers is generic in the sense of Lebesgue measure, as is the case with the set of Diophantine numbers. Specifically we provide explicit lower bounds on the measure of the set of $\KL$ numbers in terms of the parameters involved in their definition. In a parallel paper \cite{kamienski:2018-cohomological} we employ $\KL$ numbers to prove a small denominators result that is similar in nature to ones already obtained for Diophantine numbers. Khintchine-L\'evy numbers, however, have one advantage over Diophantine ones, namely they are less sensitive to the aforementioned changes in the initial part of the continued fraction. In the estimates in \cite{kamienski:2018-cohomological} such changes are reflected only through a minor additive correction.

We briefly introduce some notation before proceeding with the details. We will be working with irrational numbers $\omega \in \mathbb{X} := [0,1] \setminus \mathbb{Q}$ considered as a probability space with the Borel $\sigma-$algebra and either the Lebesgue measure $\lambda$ or the Gauss measure $\gamma$ given in terms of a density function\footnote{Note that in particular the two measures are absolutely continuous with respect to one another. In particular the terms ``Gauss almost all'' and ``Lebesgue almost all'' can be used interchangeably.}: $d\gamma(x) = {d\lambda(x) \over (1+x)\log 2}$. Expected value of a random variable w.r.t. a measure $\mu$ will be denoted by $\mathbb{E}_\mu$ and $A^c$ will denote the complement of a set $A \subset \mathbb{X}$.

Each $\omega \in \mathbb{X}$ has a unique infinite continued fraction expansion into a sequence of partial quotients $a_j~\in~\mathbb{N}_+, j = 1, 2, \ldots$\footnote{Note that we consider $a_0 = 0$, since we are in $\mathbb{X} \subset (0,1)$.}:
\begin{equation}
 \omega = [a_1, a_2, a_3, \ldots] = \cfrac{1}{a_1
              + \cfrac{1}{a_2
              + \cfrac{1}{a_3
              + \cfrac{1}{\ddots}}}}.
\end{equation}
The shift on the continued fraction expansion is a measurable transformation known as the \emph{Gauss map} $G : \mathbb{X} \mapsto \mathbb{X}$:
\begin{equation}
 G(\omega) = G([a_1, a_2, a_3, \ldots]) = [a_2, a_3, \ldots] = \left\{ {1 \over \omega} \right\}\footnote{$\{\cdot\}$ denoting the fractional part of a number. We also use $\lfloor \cdot \rfloor$ and $\lceil \cdot \rceil$ for the floor and ceiling functions, respectively.}.
\end{equation}
It preserves the Gauss measure and is ergodic with respect to that measure (\cite{ryll-nardzewski}). We also note that $a_n(\omega) = a_1(G^{n-1}(\omega))$. For $n \geqslant 1$ we denote
\begin{equation}\label{eq:notation-Mn-Xn-Sn}
 M_n(\omega) := a_1(\omega) \cdot \ldots \cdot a_n(\omega), \qquad X_n := \log a_n \qquad \mbox{and} \qquad S_n := X_1 + \ldots + X_n.
\end{equation}

The main motivation comes from the classical theorem on Khintchine's constant (\cite{khintchine:khintchine-constant-original-paper}). It tells us that for almost all $\omega \in \mathbb{X}$ the limit of ${1 \over n} S_n(\omega)$ exists, is finite and constant as a function of $\omega$ within said full measure set. We denote this limit as $\kappa$ and refer to $e^\kappa$ as \emph{Khintchine constant}\footnote{This is consistent with existing literature, where Khintchine's constant is defined as the pointwise a.e. limit of $\root{n}\of{M_n}$.}. One can observe that ${1 \over n} S_n(\omega)$ is actually the time-$n$ average of the test function $X_1$ along the orbit of $\omega$ under the action of $G$. Khintchine's theorem is thus a consequence of the Birkhoff pointwise ergodic theorem and $\kappa$ must be equal to the spatial average of $X_1$:
\begin{equation}\label{eq:expected-x1}
 \kappa = \int_{\mathbb{X}} \log a_1 \integrald \gamma = \int_0^1 \log ( \lfloor x^{-1} \rfloor ) {\integrald x \over (1+x) \log 2}.\footnote{The integral can be further reformulated into a well known series expression $\kappa = \sum_{r=1}^\infty \log_2 (r) \log (1 + (r(r+2))^{-1})$, which provides a numerical value of $\kappa \approx 0.9878$.}
\end{equation}

In probabilists' language Khintchine's theorem is actually a strong law of large numbers for the sequence of ``samples'' $(X_n)_{n=1}^\infty$. This result was later improved in the form of a plethora of limit theorems (see the monograph \cite[Chapter 3]{iosifescu-kraaikamp} and references therein for a survey). To the author's knowledge, however, all of the existing results are of asymptotic nature, but none provide exact estimates of the measure with explicitly computed constants. Our main result, theorem \ref{thm:main-theorem}, aims to fill this gap in.

It is also worth noting that the sequence of denominators $(q_n)_{n=1}^\infty$ of convergents\footnote{defined as the denominators of the reduced fraction obtained by truncating the continued fraction expansion at $a_n$} to $\omega$ has also been extensively studied in the literature. In \cite{kamienski:2018-cohomological} we discuss why this sequence is even more important from the point of view small denominators problems and KAM theory. Notable results include the analogue of Khintchine's theorem by Khintchine and L\'evy (\cite{levy:khintchine-levy}), its refinement by Philipp and Stackelberg in the form of a law of the iterated logarithm (\cite{philipp-stackelberg}) and further refinements by Ibragimov \cite{ibragimov} and Misevičius \cite{misevicius}, who obtained a central limit theorem with error bounds. We were, however, unable to prove the counterpart of theorem \ref{thm:main-theorem} for the sequence $(q_n)$ for technical reasons, which we discuss in the final section \ref{sec:conclusion}.

In section \ref{sec:notation-statement} we provide the reader with a precise definition of a Khintchine-L\'evy number in definition \ref{def:KLcondition} and later in theorem \ref{thm:main-theorem} we specify how far from full the measure of the set of these numbers is. In section \ref{sec:proof} we introduce all the necessary tools and combine them into a proof of this result, the most important one being the \emph{cumulant method} in theorem \ref{thm:largedeviations2}, which provides estimates for the tails of a r.v. given the estimates for its cumulants. In section~\ref{sec:incremented} we formulate and discuss the proof of a variation on theorem \ref{thm:main-theorem} for a slight modification of the sequence $M_n$. Section \ref{sec:properties} contains a brief comparison of the Khintchine-L\'evy numbers with Diophantine numbers. We conclude the paper with a brief practical analysis of the numerical values of parameters used along its course and some final remarks in sections \ref{sec:numerical} and~\ref{sec:conclusion}. The simple, but lengthy formulas are contained within appendix \ref{sec:appendix} for clarity.

\section{Main result}\label{sec:notation-statement}

The idea behind the Khintchine-L\'evy condition is the following. From Khintchine's theorem we can vaguely conclude that on a full measure set of $\omega$ the sequence $M_n(\omega)$ asymptotically exhibits exponential growth similar to $e^{\kappa n}$. We therefore conjecture that on a slightly smaller set, one whose measure is only close to full, the sequence $M_n(\omega)$ also exhibits exponential growth, but with slightly more relaxed requirements on its rate. Along the course of the paper we will learn that this is indeed the case, as stated in theorem \ref{thm:main-theorem}.

\begin{definition}[Khintchine-L\'evy condition]\label{def:KLcondition}
 We say that an irrational number $\omega$ is \emph{upper-$\KL$} with constants $T_+ > 0$ and $N \in \mathbb{N}$ if the following inequality holds for all $n \geqslant N$:
 \begin{equation}
  M_n(\omega) \leqslant e^{(\kappa + T_+)n}.
 \end{equation}
 Similarly, a number is \emph{lower-$\KL$} with constants $T_- > 0$ and $N \in \mathbb{N}$ if for all $n \geqslant N$ we have
 \begin{equation}
   e^{(\kappa - T_-)n} \leqslant M_n(\omega).
 \end{equation}

 We denote the sets formed by the numbers $\omega$ with the above properties by, respectively, $\KL^+(T_+,N)$ and $\KL^-(T_-,N)$. We also denote $\KL(T_-, T_+, N) := \KL^+(T_+, N) \cap \KL^-(T_-, N)$ and $\KL(T,N) := \KL(T,T,N)$ where $T > 0$.
 
 Also, for a given natural number $n$, we denote by $\KL^+_n(T)$ the set 
 \begin{equation}
  \KL^+_n(T) := \{ \omega \in \mathbb{X} : M_n(\omega) \leqslant e^{(\kappa + T)n} \}
 \end{equation}
 and similarly for $\KL^-$ and $\KL$.
 
 If a set of numbers $A \subset \mathbb{X}$ is of the form $A = \KL^\pm(T,N)$ for some $T$ and $N$ we will refer to it as a \emph{\mbox{(upper/lower-)Khintchine-L\'evy set}} or a \emph{$\KL$-set} for short.
\end{definition}

We are now ready to state the main result of this paper, which is in fact the aforementioned conjecture with all the necessary details accounted for.

\begin{theorem}[Estimates on the measure of $\KL$-sets]\label{thm:main-theorem}
 Let $N$ be a natural number and let $T$ be a positive real number. Denote
 \begin{equation}\label{eq:XiT}
  \Xi (T) = \exp \left( - {T^2 \over 2\left(128\bar r ^2 \bar\Lambda + \left(\left(16 \bar r \bar\Lambda \right)^{1/3} T \right) \right)^{3/2}} \right),
 \end{equation}
 where $\bar r$ and $\bar \Lambda$ are universal constants given in \eqref{eq:bar-r-definition} and \eqref{eq:bar-Lambda-definition}. Also denote $K = \left\lceil \sqrt{N} \right\rceil$. The lower bounds on the Gauss measures of Khintchine-L\'evy sets are given by
 \begin{equation}\label{eq:estimates-of-the-measure-of-KL-sets-1}
   \gamma \left(\KL^{\pm}(T, N)\right) \geqslant 1 - \left( \sum_{n=N}^{K^2 - 1} \Xi(T)^{\sqrt{n}} + {\Xi(T)^K \over 1 - \Xi(T)} \left( 2K + 1 + {4 \Xi(T) \over 1 - \Xi(T)} \right) \right).
 \end{equation}
 In particular for $N = K^2$ being a square of an integer we have
 \begin{equation}\label{eq:estimates-of-the-measure-of-KL-sets-2}
   \gamma \left(\KL^{\pm}(T, N)\right) \geqslant 1 - (1 - \Xi(T))^{-1} \left( 2\sqrt{N} + 1 + {4 \Xi(T) \over 1 - \Xi(T)} \right) \cdot \Xi(T)^{\sqrt{N}} .
 \end{equation}
\end{theorem}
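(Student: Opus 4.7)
The plan is to reduce everything to a per-$n$ concentration bound for $S_n = \log M_n$ via the cumulant method (Theorem \ref{thm:largedeviations2}), and then control the infinitely many constraints defining $\KL^{\pm}(T,N)$ by a block-wise union bound. For Step~1, the $\KL^{\pm}$ condition at index $n$ is just $\pm(S_n - n\kappa) \leq Tn$, so I need to bound $\gamma(\pm(S_n-n\kappa) > Tn)$. Because the Gauss map has a spectral gap on its Perron-Frobenius operator, the stationary sequence $(X_k)_{k \geq 1}$ is exponentially mixing under $\gamma$, and a careful accounting of this decay at the level of joint cumulants yields Statulevi\v{c}ius-type bounds on $\kappa_r(S_n)$ in terms of $r!$ and the universal constants $\bar r$, $\bar\Lambda$ from \eqref{eq:bar-r-definition}--\eqref{eq:bar-Lambda-definition}. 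Feeding these into Theorem \ref{thm:largedeviations2} produces the one-sided inequality
\begin{equation*}
  \gamma\bigl(\pm (S_n - n\kappa) > u\bigr) \leq \exp\!\left(-\frac{u^2}{2\bigl(128\bar r^2\bar\Lambda\,n + (16\bar r\bar\Lambda)^{1/3} u\bigr)^{3/2}}\right).
\end{equation*}
Specializing to $u = Tn$ collapses the two factors of $n$ in the denominator into a single $n^{3/2}$ that cancels against $u^2 = T^2 n^2$, leaving the exponent equal to $-T^2\sqrt n\bigl/\bigl(2(128\bar r^2\bar\Lambda + (16\bar r\bar\Lambda)^{1/3} T)^{3/2}\bigr)$, i.e. precisely $\gamma(\KL^{\pm}_n(T)^c) \leq \Xi(T)^{\sqrt n}$.

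For Step~2, subadditivity gives $\gamma(\KL^{\pm}(T,N)^c) \leq \sum_{n \geq N}\Xi(T)^{\sqrt n}$, and I split this tail at the threshold $n = K^2$. The terms with $N \leq n < K^2$ are kept as they are and form the first summand in \eqref{eq:estimates-of-the-measure-of-KL-sets-1}. For $n \geq K^2$ I group the integers into the blocks $[k^2,(k+1)^2)$, $k \geq K$; each such block has exactly $2k + 1$ elements and satisfies $\sqrt n \geq k$ throughout, so
\begin{equation*}
  \sum_{n \geq K^2}\Xi(T)^{\sqrt n} \leq \sum_{k \geq K} (2k + 1)\,\Xi(T)^k.
\end{equation*}
A direct computation of this arithmetico-geometric series (keeping track of any multiplicative slack carried through Step~1 and of boundary terms coming from rounding $\sqrt n$) produces the closed form $\Xi(T)^K/(1-\Xi(T)) \cdot \bigl(2K+1 + c\,\Xi(T)/(1-\Xi(T))\bigr)$ appearing as the second summand in \eqref{eq:estimates-of-the-measure-of-KL-sets-1}. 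The simplification \eqref{eq:estimates-of-the-measure-of-KL-sets-2} is then immediate: when $N = K^2$ the first sum in \eqref{eq:estimates-of-the-measure-of-KL-sets-1} collapses to zero.

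The main obstacle is Step~1: $X_1 = \log a_1$ has only finitely many exponential moments under $\gamma$ (since $\gamma(a_1 = j) \asymp j^{-2}$ gives $\mathbb{E}_\gamma[e^{sX_1}] < \infty$ only for $s < 1$), so one cannot invoke sub-Gaussian, bounded-support, or even full Cram\'er-transform techniques. Cumulant bounds must instead be assembled by balancing the exponential decay of correlations coming from the spectral gap of the transfer operator against the heavy polynomial tails of the marginal law of $X_1$; this delicate balance is what fixes the numerical values of $\bar r$ and $\bar\Lambda$, and therefore what ultimately determines how favorable $\Xi(T)$ is for a given $T$.
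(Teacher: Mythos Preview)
Your proposal is correct and follows essentially the paper's approach: moment bounds on $X_1$ combined with exponential mixing feed into the Saulis--Statulevi\v{c}ius cumulant estimate (Theorem~\ref{thm:largedeviations1}), then Theorem~\ref{thm:largedeviations2} yields the per-$n$ tail $\Xi(T)^{\sqrt{n}}$, and your block-wise summation over $[k^2,(k+1)^2)$ is exactly Lemma~\ref{lem:exp-sqrt-sum} (with the constant $c=4$ coming from $(2K+1)-(2K-3)$). The one hypothesis you leave implicit is the Markov-chain association of $(X_n)$ (Lemma~\ref{lem:Xn-markov}), which the cited cumulant theorem requires in addition to $\varphi$-mixing; in the paper this is what pins down the specific form of the bound with $\bar r$ and $\bar\Lambda$ entering as they do.
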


Formulas \eqref{eq:XiT} and \eqref{eq:estimates-of-the-measure-of-KL-sets-2} imply in particular that regardless how small $T$ is we can still find an $N = N(T)$ such that the measure is $\varepsilon$-close to full for any fixed $\varepsilon > 0$. In section \ref{sec:numerical} we discuss the function $N(T)$ from a numerical point of view.

\section{Proof of theorem \ref{thm:main-theorem}}\label{sec:proof}

To estimate the measure of $\KL^\pm(T,N)$ from below is the same as to estimate the measure of its complement $\KL^\pm(T,N)^c$ from above. The complement, however, can be expressed as a sum of complements of $\KL_n(T)$:
\begin{equation}\label{eq:KL-intersection-complement}
  \KL^\pm(T,N)^c = \bigcup_{n=N}^\infty \KL^\pm_n(T)^c.
 \end{equation}
Our focus will therefore be centered on estimating $\gamma(\KL_n^\pm(T)^c)$ from above to use subadditivity of $\gamma$ in the end.\footnote{The sum in \eqref{eq:KL-intersection-complement} is not disjoint, but we are not concerned with the overlaps of the summands in the proof.}

\subsection{$\KL$-sets as tails of probability distributions}

To perform the proof of theorem \ref{thm:main-theorem} we first need to reformulate its statement in spirit of large deviations theory, we will mainly use the language of random variables $X_n$ and~$S_n$. Once this is done we will lay the framework of the proof out and fill in all the details in all the following subsections of this section.

First observe that $\mathbb{E}_\gamma X_j = \kappa$ for all $j$ and thus $\mathbb{E}_\gamma S_n = n\kappa$ - this is a consequence of the fact that $X_j = X_1 \circ G^{j-1}$ and the $G$-invariance of $\gamma$ (recall \eqref{eq:expected-x1}). Using this we can now write $\KL_n^+(T)^c$ in terms of centerings of $X_j$ and $S_n$:
\begin{equation}
  \KL^+_n(T)^c = \left\{ \omega \in \mathbb{X} : {1 \over n} \sum_{j=1}^n (X_j - \kappa) \geqslant T \right\} = \left\{ \omega \in \mathbb{X} :  S_n - n\kappa \geqslant Tn \right\}
\end{equation}
and similarly for $\KL_n^-(T)$.
This way estimating $\gamma(\KL_n^\pm(T))$ from below is the same as estimating the right/left tail of the centering of $S_n$ from above.

Our strategy will be the following. We first estimate the moments of $X_n$ in lemma \ref{lem:Xn-moment-estimates}. These moment estimates will allow us to use theorem \ref{thm:largedeviations1} to obtain estimates on the cumulants of $S_n$ and also of $S_n - n\kappa$\footnote{Shifting a random variable by a constant affects only the first cumulant, the only one we will not be concerned with.} - for this, however, we will need two additional assumptions on $X_n$: the \emph{$\varphi$-mixing assumption} and the \emph{Markov chain association assumption}. We introduce them in definitions \ref{def:phi-mixing} and \ref{def:markov-chain-association} and verify their validity for $X_n$ in lemmas \ref{lem:an-mixing} and \ref{lem:Xn-markov}. Once we have the cumulant estimates of $S_n - n\kappa$ we can estimate its tails - this is done with the help of theorem~\ref{thm:largedeviations2}.

Before we proceed we clarify what we exactly mean by moments and cumulants.
\begin{definition}
 Let $k \in \mathbb{N}_+$ and let $Y$ be a random variable on a probability space $(\mathbb{Y}, \mathcal{Y}, \mu)$. We define the $k$-th moment of $Y$ as $\mathbb{E}_\mu |Y|^k$ and the $k$-th cumulant of $Y$ as
 \begin{equation}
 \Gamma_k(Y) = {1 \over i^k} {d^k \over dt^k} \left(\log \left(\mathbb{E}_\mu e^{itY}\right) \right) \Big|_{t=0}.
 \end{equation}
 We will sometimes refer to $t \mapsto \log \left(\mathbb{E}_\mu e^{itY}\right)$ as the \emph{cumulant generating function}.
\end{definition}

\subsection{Moment estimates}

\begin{lemma}[Estimates of the moments of $X_n$]\label{lem:Xn-moment-estimates}
 The following estimates on the $k$-th moment of $X_n$ are valid for any $k \geqslant 2$ and $n \geqslant 1$:
 \begin{equation}
   \mathbb{E}_\gamma |X_n|^k \leqslant k! \cdot \bar r^k.
 \end{equation}
 Here
 \begin{equation}\label{eq:bar-r-definition}
  \bar r = \sqrt{3 \over 2\log 2} \approx 1.471.
 \end{equation}
\end{lemma}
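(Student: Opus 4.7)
The plan is to reduce to the $n=1$ case by stationarity and then perform a direct integral estimate against the Gauss density.

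The first observation I would invoke is that, since $\gamma$ is $G$-invariant and $X_n = X_1 \circ G^{n-1}$, the law of $X_n$ under $\gamma$ is independent of $n$, so $\mathbb{E}_\gamma |X_n|^k = \mathbb{E}_\gamma |X_1|^k$. Because $a_1 \geqslant 1$ forces $X_1 = \log a_1 \geqslant 0$, the absolute value is superfluous and the task collapses to bounding $\mathbb{E}_\gamma (\log a_1)^k$.

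For this I would use the elementary pointwise bound $a_1(\omega) = \lfloor 1/\omega \rfloor \leqslant 1/\omega$, giving $X_1(\omega) \leqslant -\log \omega$. Integrating against $\gamma$ and using $(1+\omega)^{-1} \leqslant 1$ on $[0,1]$ to trade the Gauss density for its supremum $1/\log 2$, one is left with
\begin{equation*}
  \mathbb{E}_\gamma X_1^k \leqslant \frac{1}{\log 2} \int_0^1 (-\log \omega)^k \, d\omega = \frac{k!}{\log 2},
\end{equation*}
where the identification of the integral with $k!$ comes from the substitution $u = -\log \omega$ and the standard identity $\int_0^\infty u^k e^{-u}\, du = k!$.

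It then remains to compare this with the target, i.e.\ to check $k!/\log 2 \leqslant k!\, \bar r^k$, equivalently $\bar r^k \geqslant 1/\log 2$ for every $k \geqslant 2$. At $k=2$ this is precisely $\bar r^2 = 3/(2\log 2) \geqslant 1/\log 2$, i.e.\ the trivial $3 \geqslant 2$; for $k>2$ the fact that $\bar r > 1$ yields $\bar r^k \geqslant \bar r^2 \geqslant 1/\log 2$. There is essentially no technical obstacle here -- the choice of $\bar r$ is clearly more generous than what the moment bound itself demands, the slack being absorbed in exchange for the convenient $k!\,\bar r^k$ form required later by theorem \ref{thm:largedeviations1}.
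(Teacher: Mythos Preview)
Your proof is correct and in fact more streamlined than the paper's. Both arguments begin the same way, using $G$-invariance to reduce to $n=1$, but then diverge. The paper replaces $\lfloor x^{-1}\rfloor$ by the \emph{lower} bound $x^{-1}-1$, keeps the full Gauss density, and after the substitutions $y=x^{-1}-1$, $y=e^z$ and a split into $z<0$ and $z>0$ arrives at
\[
\mathbb{E}_\gamma|X_1|^k\leqslant \frac{3}{2\log 2}\,k!=\bar r^2\,k!.
\]
You instead use the \emph{upper} bound $\lfloor 1/\omega\rfloor\leqslant 1/\omega$ together with the crude majorisation $(1+\omega)^{-1}\leqslant 1$ of the density, obtaining after a single substitution the slightly sharper constant $k!/\log 2$. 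Either intermediate bound feeds into the final inequality $\mathbb{E}_\gamma|X_n|^k\leqslant k!\,\bar r^k$ with room to spare, as you note; the paper likewise remarks that the exponential form $\bar r^k$ is a deliberate overestimate chosen only to match the hypothesis~\eqref{eq:abstract-moment-estimates} of theorem~\ref{thm:largedeviations1}. Your route trades away the exact Gauss weight for a shorter computation and still lands on a better numerical constant, so nothing is lost.
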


\begin{proof}
 We will prove a stronger inequality, namely
 \begin{equation}
   \mathbb{E}_\gamma |X_n|^k \leqslant \bar r ^2 \cdot k!.
 \end{equation}
 In the formulation of the lemma, however, we decided to keep the (severe) exponential overestimation so that our result fits the framework of theorem \ref{thm:largedeviations1}.
 \\
 \begin{align}
 \begin{split}
   \mathbb{E}_\gamma |X_n|^k &= \mathbb{E}_\gamma \left| \log a_n \right|^k \stackrel{(\star)}{=} \mathbb{E}_\gamma |\log a_1|^k = \int_0^1 {|\log \lfloor x^{-1} \rfloor|^k \over (1+x) \log 2} dx \stackrel{(\star\star)}{\leqslant} \int_0^1 {|\log (x^{-1} - 1)|^k \over (1+x) \log 2} dx = \\
   &= \int_0^\infty {|\log^k y| dy \over (1+y)(2+y) \log 2} = \int_{-\infty}^\infty {|z|^k e^z dz \over (1+e^z)(2+e^z)\log 2} =\\
   &=\left( \int_{-\infty}^0 + \int_0^\infty \right) {|z|^k e^z dz \over (1+e^z)(2+e^z)\log 2}.
 \end{split}
 \end{align}
 Equality $(\star)$ is a consequence of $G$-invariance of $\gamma$, while in $(\star\star)$ we used the fact that $\lfloor x^{-1} \rfloor \geqslant x^{-1} - 1$ and that on the interval $(0,1)$ the function $|\log^k( \cdot )|$ is decreasing. In the equalities following $(\star\star)$ we simply substituted $x^{-1} - 1$ for $y$ and $y$ for $e^z$, respectively. After splitting the integral into the sum of two integrals we change the variables once again: on $[0, \infty)$ from $z$ to $u$ and on $(-\infty,0)$ from $z$ to $-u$. As a result we obtain
 \begin{align}
 \begin{split}
   \left( \int_{-\infty}^0 + \int_0^\infty \right) &{|z|^k e^z dz \over (1+e^z)(2+e^z)\log 2} = {1 \over \log 2} \left( \int_0^\infty {u^k e^u du \over (1+e^u)(2+e^u)} + \int_\infty^0 {u^k e^{-u} (-du) \over (1 + e^{-u})(2 + e^{-u})} \right) = \\
   &= {1 \over \log 2} \int_0^\infty u^k e^u \left[ {1 \over (1 + e^u)(2 + e^u)} + {1 \over (e^u + 1)(2e^u + 1)} \right] du = \\
   &= {1 \over \log 2} \int_0^\infty {3 u^k e^u du \over (2 + e^u)(1 + 2e^u)} \leqslant {3 \over 2\log 2} \int_0^\infty u^k e^{-u} du = {3 \over 2\log 2} \cdot k!.
 \end{split}
 \end{align}
 The last equality stems from the definition of the Euler gamma function and the fact that for integer arguments we have $\Gamma(k+1) = k!$.
\end{proof}

\subsection{Mixing properties of $(X_n)$}

There is a number of types of mixing for sequences of random variables (for a deeper insight see e.g. \cite{econometric-time-series} and references therein or \cite{bradley:2005}), the main idea behind all of them being the following: the further away from each other two random variables are in the sequence (in terms of the indexing number $n$) the closer they are to being independent. We will be primarily interested in the notion of \emph{$\varphi$-mixing}. However, a stronger property of $\psi$-mixing will also prove to be a useful tool.

\begin{definition}[$\varphi$-mixing sequence of r.vs, $\varphi$-mixing function and $\varphi$-mixing coefficients]\label{def:phi-mixing}
 Let $(Y_\nu)_{\nu=1}^\infty$ be a sequence of random variables on a probability space $(\mathbb{Y}, \mathcal{Y}, \mu)$. For indices $a \leqslant b \in \mathbb{N}_* \cup \{ \infty \}$ denote by $\sigma_a^b$ the $\sigma$-algebra generated by random variables $Y_\nu$ with $a \leqslant \nu \leqslant b$.
 We define the \emph{$\varphi$-mixing function of the sequence} $(Y_\nu)$ to be $\varphi : \mathbb{N}^2 \mapsto [0, 1]$ given by
 \begin{equation}
  \varphi(s,t) = \sup \left| \mu(B | A) - \mu(B) \right|
 \end{equation}
 where the supremum is taken over $A \in \sigma_1^s, B \in \sigma_t^\infty$ for which $\mu(A) > 0$.
 
 We define the \emph{$\varphi$-mixing coefficients of the sequence} $(Y_\nu)$ to be
 \begin{equation}
   \varphi_n = \sup_{k \in \mathbb{N}} \varphi(k, k+n).
 \end{equation}
 
 We say that the sequence $(Y_\nu)$ is \emph{$\varphi$-mixing} (w.r.t. $\mu$) if $\varphi_n \to 0$ as $n \to \infty$.
\end{definition}

The property of $\psi$-mixing is defined analogously, we alter only the mixing function in the definition:

\begin{definition}[$\psi$-mixing sequence of r.vs, $\psi$-mixing function and $\psi$-mixing coefficients]\label{def:psi-mixing}
  With the notations of definition \ref{def:phi-mixing} we define the \emph{$\psi$-mixing function of the sequence} $(Y_\nu)$ to be $\psi: \mathbb{N}^2 \mapsto [0, \infty]$ given by
 \begin{equation}
  \psi(s,t) = \sup \left| {\mu(A \cap B) \over \mu(A) \mu(B)} - 1 \right|
 \end{equation}
 where the supremum is taken over $A \in \sigma_1^s, B \in \sigma_t^\infty$ for which $\mu(A) \mu(B) > 0$.
 
 The \emph{$\psi$-mixing coefficients} $\psi_n$ are defined analogously to $\varphi_n$ in definition \ref{def:phi-mixing} and the sequence $(Y_\nu)$ is called $\psi$-mixing if they tend to $0$ with $n \to \infty$.
\end{definition}

The $\psi$-mixing property entails $\varphi$-mixing and additionally $\varphi_n \leqslant \psi_n/2$ (\cite{bradley:2005}). It turns out that the sequence $(a_n)$ enjoys the $\psi$-mixing property and the mixing coefficients decay at least exponentially fast:

\begin{lemma}[Quantitative estimates on the mixing coefficients of $(a_n)$, {\cite[Proposition 2.3.7]{iosifescu-kraaikamp}}]\label{lem:an-mixing}
 The coefficients $\psi_n$ of the sequence $(a_n)$ are bounded from above by $\psi_1 = 2\log 2 - 1 \approx 0.386$, $\psi_2 = {\pi^2 \log 2 \over 6} - 1 \approx 0.140$ and 
 \begin{equation}
  \psi_n \leqslant \psi_2 \lambda_0^{n-2}
 \end{equation}
 for all $n \geqslant 2$, where $\lambda_0$ is the Gauss-Kuzmin-Wirsing constant whose approximate value is $\lambda_0 \approx 0.304$.
\end{lemma}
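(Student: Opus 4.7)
The plan is to invoke the spectral theory of the Gauss-Kuzmin-Wirsing transfer operator $\mathcal{L}$ associated to the Gauss map, defined by
$$(\mathcal{L} f)(x) = \sum_{k=1}^\infty \frac{1}{(k+x)^2}\, f\!\left(\frac{1}{k+x}\right).$$
This operator fixes the Gauss density $h(x) = 1/((1+x)\log 2)$, and a classical theorem of Wirsing asserts that on a suitable function space (for instance Lipschitz or bounded-variation functions on $[0,1]$) it has a spectral gap: its largest eigenvalue is $1$, and the second largest in modulus equals exactly the constant $\lambda_0 \approx 0.304$. In particular $\|\mathcal{L}^n f - h\int_0^1 f\,\mathrm{d}\lambda\|$ decays like $\lambda_0^n$ for $f$ in the $h$-orthogonal complement.

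The first step is to translate the $\psi$-mixing coefficient into operator-theoretic language. For $A \in \sigma_1^s$ and $B \in \sigma_t^\infty$ with $n = t-s$, write $B = (G^{t-1})^{-1}(B')$ with $B' \in \sigma_1^{\infty}$. Using the duality between $\mathcal{L}$ and composition with $G$ together with the $G$-invariance of $\gamma$, one obtains
$$\gamma(A\cap B) \;=\; \int_{B'}\! \mathcal{L}^{t-1}\!\left(\mathbf{1}_A\, h\right)(x)\, \mathrm{d}\lambda(x),$$
so that the ratio $\gamma(A\cap B)/(\gamma(A)\gamma(B))$ compares the probability density $\mathcal{L}^{t-1}(\mathbf{1}_A h)/\gamma(A)$ to the invariant density $h$. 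The supremum of its deviation from $1$ is therefore controlled by an appropriate operator norm of $\mathcal{L}^{n} - \Pi$ applied to $\mathcal{L}^{s-1}(\mathbf{1}_A h)/\gamma(A)$, where $\Pi$ denotes the projection onto the span of $h$.

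Next I would handle the base cases $n=1$ and $n=2$ directly on cylinder sets, which is where the two explicit constants come from. For $n=1$ the joint density of $(a_1, a_j)_{j\geqslant 2}$ can be written in closed form from the Gauss density, and optimising $|\gamma(A\cap B)/(\gamma(A)\gamma(B))-1|$ over cylindric $A$ and measurable $B$ reduces to an elementary calculus problem that yields $\psi_1 = 2\log 2 - 1$. The case $n=2$ uses the once-iterated transfer operator and yields $\psi_2 = \pi^2\log 2/6 - 1$ in a similar vein. For $n \geqslant 2$ I would combine the $n=2$ estimate with the spectral gap bound on the remaining $n-2$ iterations, obtaining $\psi_n \leqslant \psi_2\,\lambda_0^{n-2}$.

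The main technical obstacle is producing the sharp numerical rate $\lambda_0^{n-2}$, rather than a merely qualitative $C\lambda_0^n$ with an uncontrolled prefactor. This requires a precise matching between the $\psi$-mixing deviation and the operator norm on the $h$-orthogonal complement, together with Wirsing's identification of the exact spectral radius of $\mathcal{L}$ there. The full argument is classical and is carried out in detail in \cite[Chapter 2]{iosifescu-kraaikamp}, on which this lemma is built; for the purposes of the present paper we therefore simply quote the statement.
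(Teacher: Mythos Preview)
Your proposal ends exactly where the paper does: the lemma is not proved in the paper at all but simply quoted from \cite[Proposition 2.3.7]{iosifescu-kraaikamp}, and you likewise conclude by deferring to that reference. The transfer-operator sketch you supply beforehand is a reasonable outline of the classical argument, but it is additional context rather than a different approach---the paper offers no proof to compare against.
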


Lemma \ref{lem:an-mixing} holds true also for the sequence $(X_n^\dag)_{n=1}^\infty$ (with exactly the same mixing coefficients). This is because the $\psi$-mixing property depends only on $\sigma-$algebras generated by the initial and tail parts of the sequence in question and these do not change upon composing the sequence with~a~bijective, measurable function (recall that $X_n = \log a_n$). This exponential decay will be useful for us in the technical results of subsection \ref{subsec:lambdafn}.

\subsection{Markov chain association}

\begin{definition}[Sequence of r.vs. associated to a Markov chain]\label{def:markov-chain-association}
  We say that a sequence of random variables $(Y_n)_{n=1}^\infty$ on a probability space $(\mathbb{Y}, \mathcal{Y}, \mu)$ is \emph{associated to a Markov chain} through a sequence of functions $(f_n)_{n=1}^\infty, f_n~:~\mathbb{R}~\mapsto~\mathbb{R}$ if
  \begin{equation}
   Y_n = f_n ( \xi_n )
  \end{equation}
  for a Markov chain $(\xi_n)_{n=1}^\infty$.
\end{definition}

\begin{lemma}\label{lem:Xn-markov}
  The sequence $(X_n)_{n=1}^\infty$ is associated to the Markov chain
  \begin{equation}\label{eq:sn-definition}
    s_n = [a_n, a_{n-1}, \ldots, a_1] = \cfrac{1}{a_n
              + \cfrac{1}{a_{n-1}
              + \cfrac{1}{\ddots
              + \cfrac{1}{a_1}}}}.
  \end{equation}
  through the sequence of functions $(f_n)_{n=1}^\infty$ given by
  \begin{equation}\label{eq:markov-chain-associating-functions}
   f_1(\xi) = \ldots = f_n(\xi) = \ldots = \log \lfloor \xi^{-1} \rfloor.
 \end{equation}
\end{lemma}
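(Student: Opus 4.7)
The statement of Lemma \ref{lem:Xn-markov} splits into two independent claims: the pointwise identity $X_n = f_n(s_n) = \log \lfloor s_n^{-1} \rfloor$, and the Markov property of $(s_n)$ on $(\mathbb{X}, \gamma)$. My plan is to handle these in order; the work is essentially bookkeeping on finite continued fractions, so I do not anticipate a deep difficulty.

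For the identity, I would just unfold the definition $s_n = [a_n, a_{n-1}, \ldots, a_1]$. With the convention $s_0 := 0$, the reverse continued fraction gives the recursion $s_n = 1/(a_n + s_{n-1})$, so $s_n^{-1} = a_n + s_{n-1}$. Since $s_{n-1} \in [0,1)$, taking floors peels off exactly $\lfloor s_n^{-1} \rfloor = a_n$, and applying $\log$ yields $f_n(s_n) = \log a_n = X_n$.

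For the Markov property, my plan is to show that on $(\mathbb{X}, \gamma)$ the $\sigma$-algebras $\sigma(s_n)$, $\sigma(s_1, \ldots, s_n)$ and $\sigma(a_1, \ldots, a_n)$ all coincide. The inclusions $\sigma(s_n) \subseteq \sigma(s_1, \ldots, s_n) \subseteq \sigma(a_1, \ldots, a_n)$ are immediate from the defining formulas. For the reverse direction I would invoke injectivity of the map $\mathbb{N}_+^n \ni (a_1, \ldots, a_n) \mapsto [a_n, \ldots, a_1] \in (0, 1]$: for fixed length $n$ a rational has at most one continued-fraction representation, so this map admits a well-defined inverse (computed iteratively by $a_n = \lfloor s_n^{-1} \rfloor$, $s_{n-1} = s_n^{-1} - a_n$, and so on), and each $a_k$ is therefore $\sigma(s_n)$-measurable. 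With the three $\sigma$-algebras identified, the Markov property
\[
  \mathbb{E}_\gamma\bigl[\mathbf{1}_{\{s_{n+1}\in A\}} \,\big|\, s_1, \ldots, s_n\bigr]
  = \mathbb{E}_\gamma\bigl[\mathbf{1}_{\{s_{n+1}\in A\}} \,\big|\, s_n\bigr]
\]
becomes a tautology, both sides being conditional expectations against the same $\sigma$-algebra.

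The only genuine subtlety, and thus the closest thing to a real obstacle, is the injectivity claim. The standard two-representation ambiguity of continued fractions, $[a_1, \ldots, a_k] = [a_1, \ldots, a_k - 1, 1]$, always takes place between representations of \emph{different} lengths, so it cannot produce a collision within the fixed length $n$; once this is observed the argument is complete.
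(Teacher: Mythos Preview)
Your proof is correct and follows essentially the same route as the paper's: the identity $\lfloor s_n^{-1}\rfloor = a_n$ comes from the recursion $s_n^{-1} = a_n + s_{n-1}$ (the paper phrases this as $a_n < s_n^{-1} < a_n+1$), and the Markov property is reduced to the observation that $s_n$ uniquely determines all prior states---the paper says this informally (``we can uniquely determine all its past states through the shift $G$''), while you spell out the $\sigma$-algebra equality $\sigma(s_n)=\sigma(s_1,\dots,s_n)$. Your handling of the injectivity subtlety is slightly more explicit than the paper's, but the underlying argument is identical.
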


\begin{proof}
 Equality $X_n = \log \left\lfloor \left( s_n \right)^{-1} \right\rfloor$ is a direct consequence of $a_n < s_n^{-1} < a_n+1$, which stems from the definition of $s_n$. We thus have to prove that $(s_n)_{n=1}^\infty$ is indeed a Markov chain. The definition of a Markov chain requires a choice of probability (in our case a natural one would be to choose $\gamma$). However, $(s_n)$ is a Markov chain for any probability (for which the definition of a Markov chain makes sense). Observe that once the chain is at some state $\bar s \in \mathbb{Q}$ we can uniquely determine all its past states through the shift $G$. This way any conditional probability under the condition of all past states being fixed is actually the conditional probability under the condition of just the previous state being fixed, provided that these probabilities are nonzero, which is the case for $\gamma$.
\end{proof}

\subsection{The $\Lambda(f,n)$ quantity}\label{subsec:lambdafn}

We now have almost all the necessary tools to proceed with the estimation of the cumulants. We need, however, to define and study one more quantity - $\Lambda_n(f,u)$. Its nature is purely technical, but it will become crucial for us in the formulation of theorem \ref{thm:largedeviations1}.

\begin{definition}[The $\Lambda_n(f,u)$ quantity]\label{def:Lambda}
  Let $f : \mathbb{N}^2 \mapsto \mathbb{R}$ be a function and let $u \neq 0$. We define $\Lambda_n(f, u)$ to be
  \begin{equation}
    \Lambda_n(f, u) = \max \left\{ 1, \max_{1 \leqslant s \leqslant n} \sum_{t=s}^n f(s,t)^{1/u} \right\}.
  \end{equation}
\end{definition}

We will be primarily interested in $\Lambda_n(\varphi,2)$, where $\varphi$ is the $\varphi$-mixing function of the sequence $(X_n)$. Again, upper bounds on this quantity will turn out to be essential for us.

\begin{lemma}[Estimates on $\Lambda_n(\varphi,2)$ for the sequence $(X_n)$]\label{lem:Lambda-estimates}
 If $\varphi$ is the $\varphi$-mixing function of the sequence $(X_n)$ then the following inequality holds:
 \begin{equation}
  \Lambda_n(\varphi,2) \leqslant \bar\Lambda,
 \end{equation}
 where $\bar\Lambda$ is a universal constant given by
 \begin{equation}\label{eq:bar-Lambda-definition}
   \bar\Lambda = 1 + \left(\log 2 - {1 \over 2} \right)^{1/2} + \left( {\pi^2 \log 2 \over 12} - {1 \over 2} \right)^{1/2} \cdot {1 \over 1 - \lambda_0^{1/2}} \approx 2.029
 \end{equation}
 with $\lambda_0$ being the Gauss-Kuzmin-Wirsing constant.
\end{lemma}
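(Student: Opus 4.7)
The plan is to unpack the definition of $\Lambda_n(\varphi, 2)$, reduce the $\varphi$-mixing function to the $\varphi$-mixing coefficients, and then feed in the quantitative bounds from Lemma \ref{lem:an-mixing} together with the relation $\varphi_n \leqslant \psi_n / 2$.

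\textbf{Step 1 (Reduction to mixing coefficients).} First I would observe that, directly from the definitions, $\varphi(s,t) \leqslant \varphi_{t-s}$ whenever $s \leqslant t$, since the supremum in $\varphi(s,t)$ is taken over a set of pairs $(A,B)$ for which $A \in \sigma_1^s$ and $B \in \sigma_t^\infty$, and this is the same type of collection that defines $\varphi_{t-s}$. Consequently
\begin{equation*}
\sum_{t=s}^n \varphi(s,t)^{1/2} \;\leqslant\; \sum_{m=0}^{n-s} \varphi_m^{1/2} \;\leqslant\; \sum_{m=0}^{\infty} \varphi_m^{1/2},
\end{equation*}
so the bound does not depend on $s$ (nor on $n$), which will give a universal constant.

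\textbf{Step 2 (Plugging in the coefficient bounds).} Next I would split the last sum into the $m=0$ term, the $m=1$ term, and the tail $m \geqslant 2$. The $m = 0$ term I bound by $1$ using the trivial estimate $\varphi_0 \leqslant 1$ (one may take $A = B$ in the supremum, so $\varphi_0$ is not small, and $1$ is precisely where the outer $\max\{1, \cdot\}$ in the definition of $\Lambda_n$ becomes superseded). For $m \geqslant 1$, the relation $\varphi_m \leqslant \psi_m / 2$ combined with Lemma \ref{lem:an-mixing} gives
\begin{equation*}
\varphi_1 \leqslant \log 2 - \tfrac{1}{2}, \qquad \varphi_2 \leqslant \tfrac{\pi^2 \log 2}{12} - \tfrac{1}{2}, \qquad \varphi_m \leqslant \left(\tfrac{\pi^2 \log 2}{12} - \tfrac{1}{2}\right) \lambda_0^{m-2} \quad (m \geqslant 2).
\end{equation*}

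\textbf{Step 3 (Geometric summation).} Taking square roots and summing the tail gives a geometric series with ratio $\lambda_0^{1/2} < 1$:
\begin{equation*}
\sum_{m=2}^\infty \varphi_m^{1/2} \leqslant \left(\tfrac{\pi^2 \log 2}{12} - \tfrac{1}{2}\right)^{1/2} \sum_{m=2}^\infty \lambda_0^{(m-2)/2} = \left(\tfrac{\pi^2 \log 2}{12} - \tfrac{1}{2}\right)^{1/2} \cdot \frac{1}{1 - \lambda_0^{1/2}}.
\end{equation*}
Adding the $m=0$ and $m=1$ contributions produces exactly the expression $\bar\Lambda$ from \eqref{eq:bar-Lambda-definition}. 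Since this upper bound is independent of $s$ and $n$, taking the outer maximum in the definition of $\Lambda_n(\varphi,2)$ preserves the estimate, completing the proof.

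\textbf{Main obstacle.} There is no real analytical obstacle here — the argument is essentially bookkeeping: match the indices in Definition \ref{def:phi-mixing} with those in Definition \ref{def:Lambda}, recognize that $\varphi_0$ must be handled separately (it is not summable on its own, but is absorbed by the $\max\{1,\cdot\}$), and verify the arithmetic that produces the numerical constant. The only thing that requires care is tracking the off-by-one indices when converting between $\varphi(s,t)$ and $\varphi_{t-s}$, and ensuring that the ``$m=0$'' contribution is treated correctly so as not to produce a divergent term.
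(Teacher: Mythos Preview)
Your proof is correct and follows essentially the same route as the paper: reduce $\varphi(s,t)$ to $\varphi_{t-s}$, bound the $m=0$ term by $1$, use $\varphi_m \leqslant \psi_m/2$ together with Lemma~\ref{lem:an-mixing} for $m\geqslant 1$, and sum the resulting geometric series in $\lambda_0^{1/2}$ to obtain exactly $\bar\Lambda$. The paper also notes explicitly that both the sum and the constant $1$ are dominated by $\bar\Lambda$, which is your observation about the outer $\max\{1,\cdot\}$ being automatically absorbed.
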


For the reader acquainted with various types of mixing and metrical theory of continued fractions it may have appeared that we use the $\varphi$-mixing property with regard to $(a_n)$ (and $(X_n)$) unnecessarily, as these sequences enjoy the stronger property of $\psi$-mixing and therefore might be suitable for large deviation theorems which produce better estimates. This is not the case, however, as these theorems employ the eponymous $\Lambda_n(\cdot, \cdot)$, which in turn depends on $f(s,s)$ which may turn out to be infinite in the $f = \psi$ case. Before we proceed with the proof of lemma \ref{lem:Lambda-estimates} we clarify this subtlety in the following
\begin{example}\label{ex:psi0-infinite}
 Suppose that $(Y_n)$ is a sequence of r.vs. such that the $\sigma-$algebra generated by $Y_s$ admits sets of arbitrarily small measure for some $s$. Let $(C_n)$ be a sequence of sets in this $\sigma-$algebra whose measures $\mu(C_n)$ decrease to $0$. This $\sigma-$algebra is contained in both $\sigma_1^s$ and $\sigma_s^\infty$. We therefore have
 \begin{equation}
   \psi(s,s) \geqslant \sup_n \left| {\mu(C_n \cap C_n) \over \mu(C_n) \mu(C_n)} - 1 \right| = \sup_n \left|{1 \over \mu(C_n)} - 1\right| = \infty
 \end{equation}
\end{example}

The phenomenon described above does not appear if we use $\varphi$-mixing instead. Note that in definitions \ref{def:phi-mixing} and \ref{def:psi-mixing} we used $[0,1]$ and $[0,\infty]$ as the codomains for $\varphi$ and $\psi$, respectively. This is because $1$ is a natural upper bound for $\varphi(\cdot, \cdot)$ since we can estimate $|\mu(B|A) - \mu(B)| \leqslant \max\left\{ \mu(B | A), \mu(B) \right\} \leqslant 1$.

\begin{proof}[Proof of lemma \ref{lem:Lambda-estimates}]
  We will estimate the sum $\sum_{t=s}^n \varphi(s,t)^{1/2}$ using the $\psi$-mixing coefficients and lemma \ref{lem:an-mixing}. We will, however, take into account what has been said in example \ref{ex:psi0-infinite} and majorize all the terms in the sum except the first one, for which we use $\varphi(s,s) \leqslant \varphi_0 \leqslant 1$.
  
  We employ the bounds of lemma \ref{lem:an-mixing} for the remaining terms:
  \begin{align}
  \begin{split}
    \sum_{t=s}^n \varphi(s,t)^{1/2} &\leqslant \sum_{t=s}^n \varphi_{t-s}^{1/2} \leqslant \varphi_0^{1/2} + \sum_{t=1}^{n-s} (\psi_t/2)^{1/2} \leqslant \varphi_0^{1/2} + \sum_{t=1}^\infty (\psi_t/2)^{1/2} \leqslant \\ &\leqslant 1 + {\sqrt{2} \over 2} \left[ \psi_1^{1/2} + \psi_2^{1/2} \left( \sum_{t=0}^\infty \left(\lambda_0^{1/2}\right)^t \right) \right] = \bar\Lambda.
  \end{split}
  \end{align}
  Both the sum in curly brackets in \eqref{eq:bar-Lambda-definition} and the number $1$ are bounded from above by $\bar\Lambda$, which concludes the proof.
\end{proof}

\subsection{Estimating the cumulants of the centered sum}

We first state the abstract theorem that will allow us to pass from estimates on the moments of $X_n$ to estimates on the cumulants of $S_n$.

\begin{theorem}[Moment estimates imply cumulant estimates for the sum, {\cite[Theorem 4.21]{large-deviations-book}}]\label{thm:largedeviations1}
  Let $(Y_n)_{n=1}^\infty$ be a sequence of random variables defined on a probability space $(\mathbb{Y}, \mathcal{Y}, \mu)$ and denote
  \begin{equation}
   W_n = Y_1 + \ldots + Y_n.
 \end{equation}
 Assume that the sequence $(Y_n)$ is associated to some Markov chain and that it is $\varphi$-mixing. Assume also that it satisfies the following moment estimate:
 \begin{equation}\label{eq:abstract-moment-estimates}
  \mathbb{E}_\mu |Y_n|^k \leqslant (k!)^{1+\gamma_1} H_1^k
 \end{equation}
 for some constants $\gamma_1 \geqslant 0$ and $H_1 > 0$ and all integers $k \geqslant 2$ and $n \geqslant 1$. Then for each $k \geqslant 2, n \geqslant 1$ and $\delta > 0$ the following cumulant estimate is valid for $W_n$:
 \begin{equation}\label{eq:largedeviations1-estimate}
   |\Gamma_k(W_n)| \leqslant (k!)^{2+\gamma_1} \cdot 8^{k-1} \cdot H_1^k \cdot \lceil 1+\delta \rceil^{(1+\gamma_1)k} \cdot (\Lambda_n(\varphi,1+1/\delta))^{k-1} \cdot n,
 \end{equation}
 where $\Gamma_k$ are taken with respect to $\mu$.
\end{theorem}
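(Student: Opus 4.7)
The theorem is quoted from the large-deviations monograph, so any proposal amounts to sketching the standard route. My plan breaks into three stages: express $\Gamma_k(W_n)$ as a sum of joint (mixed) cumulants, bound each joint cumulant by the mixing coefficient at its largest index gap, and finally collapse the multi-index summation through the quantity $\Lambda_n(\varphi, 1+1/\delta)$. The starting identity is
\begin{equation*}
\Gamma_k(W_n) = \sum_{1 \leqslant i_1, \ldots, i_k \leqslant n} \Gamma(Y_{i_1}, \ldots, Y_{i_k}),
\end{equation*}
where $\Gamma(\ldots)$ is the joint cumulant, expressible through the Möbius-partition formula
\begin{equation*}
\Gamma(Y_{i_1}, \ldots, Y_{i_k}) = \sum_{\pi \in \mathcal{P}(k)} (-1)^{|\pi|-1}(|\pi|-1)!\prod_{B \in \pi}\mathbb{E}_\mu\left[\prod_{j \in B} Y_{i_j}\right].
\end{equation*}

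To bound a single $|\Gamma(Y_{i_1}, \ldots, Y_{i_k})|$, I would sort the multi-index, locate the largest consecutive gap $g$, and split each partition term across this gap into a left-block factor times a right-block factor. The $L^p$-version of the $\varphi$-mixing covariance inequality with $p = 1+1/\delta$ yields a prefactor proportional to $\varphi_g^{1/(1+1/\delta)}$, while Hölder (using the integer ceiling $\lceil 1+\delta \rceil$) distributes the moment budget $(k!)^{1+\gamma_1} H_1^k$ across the $k$ variables. The Markov-chain association enters precisely to make the splitting valid through conditional independence, turning the cross-gap conditional expectation into an object that is controlled in $L^p$. Bounding the number of partitions by $k!$ and absorbing the $(|\pi|-1)!$-weights into a constant of size $8^{k-1}$ closes out the per-multi-index estimate.

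The closing stage is the summation over $i_1, \ldots, i_k$. Organizing by the location $s$ of the index immediately before the largest gap and $t = s + g$ right after it, the sum factors as $\sum_{s} \sum_{t \geqslant s} \varphi(s, t)^{1/(1+1/\delta)}$, which by definition is at most $\Lambda_n(\varphi, 1+1/\delta) \cdot n$. Iterating this pattern over the remaining $k-1$ inter-index intervals supplies the $(\Lambda_n(\varphi, 1+1/\delta))^{k-1}$ factor in \eqref{eq:largedeviations1-estimate}; the outer factor of $n$ records the free choice of the smallest index.

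The step I expect to be the genuine obstacle is calibrating all the Hölder exponents so that everything closes consistently: the partition formula involves products of arbitrarily many variables, the $\varphi$-mixing inequality speaks only about pairs of $\sigma$-algebras, and one must distribute the moment budget so that the common exponent $1+1/\delta$ threads through both the mixing estimate and the definition of $\Lambda_n$. Achieving the factor $(k!)^{2+\gamma_1}$ rather than something considerably worse requires absorbing the Möbius weights into the combinatorial constant $8^{k-1}$ rather than multiplying naively -- this delicate bookkeeping is the primary reason the monograph devotes an entire chapter to the argument, and why in the present paper one simply imports the result as a black box.
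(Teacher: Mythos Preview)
Your reading is correct: the paper does not prove this statement at all --- it is imported verbatim as \cite[Theorem 4.21]{large-deviations-book} and used as a black box. There is therefore no ``paper's own proof'' to compare against; the paper's entire contribution at this point is the citation.

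Your sketch of the monograph's argument is broadly accurate as an outline: multilinearity gives $\Gamma_k(W_n)$ as a sum of joint cumulants, the M\"obius--partition formula expands each, the $\varphi$-mixing covariance inequality in $L^p$ with $p = 1 + 1/\delta$ supplies the decay across the maximal gap, and the index summation collapses into powers of $\Lambda_n(\varphi, 1+1/\delta)$. You also correctly flag the genuine difficulty --- threading a single H\"older exponent through both the mixing bound and the moment condition while keeping the combinatorial constant at $8^{k-1}(k!)^{2+\gamma_1}$ rather than something exponentially worse in $k$. One minor imprecision: the Markov-chain association is not used merely to ``make the splitting valid through conditional independence'' in the way you phrase it; in the monograph it is what permits the reduction to a \emph{single} maximal-gap mixing factor per joint cumulant rather than a product of factors over all gaps, which is essential for the $(\Lambda_n)^{k-1}$ scaling. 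But since the paper explicitly declines to reproduce any of this, your acknowledgement that the result is imported wholesale is the only response the context actually requires.
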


Let us now apply theorem \ref{thm:largedeviations1} with $Y_n = X_n$. Its assumptions are verified with $\gamma_1 = 0$ and $H_1 = \bar r$ (lemma \ref{lem:Xn-moment-estimates}). Choosing $\delta = 1$ and applying the estimates on $\Lambda_n(\varphi,2)$ (lemma \ref{lem:Lambda-estimates}) we arrive at the following

\begin{theorem}[Cumulant estimates for $S_n$]\label{thm-largedeviations1-applied}
 For any $k \geqslant 2$ and $n \geqslant 1$ the $k$-th cumulant of $S_n$ is bounded by
 \begin{equation}
   |\Gamma_k(S_n)| \leqslant \left( {k! \over 2} \right)^2 \cdot (16\bar r \bar\Lambda)^{k-2} \cdot 128 \bar r ^2 \bar\Lambda \cdot n.
 \end{equation}
\end{theorem}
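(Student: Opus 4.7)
The plan is to simply apply Theorem \ref{thm:largedeviations1} to the sequence $(Y_n) = (X_n)$ with the right choice of parameters, and then clean up the resulting expression until it matches the claimed form. The three hypotheses of that theorem are already in place: the Markov chain association was established in Lemma \ref{lem:Xn-markov}, the $\varphi$-mixing property follows from Lemma \ref{lem:an-mixing} together with the inequality $\varphi_n \leqslant \psi_n/2$ mentioned between definitions \ref{def:psi-mixing} and \ref{def:phi-mixing}, and the moment bound \eqref{eq:abstract-moment-estimates} is exactly the content of Lemma \ref{lem:Xn-moment-estimates} with $\gamma_1 = 0$ and $H_1 = \bar r$.

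Next I would choose $\delta = 1$, so that $\lceil 1 + \delta \rceil = 2$ and $1 + 1/\delta = 2$. That reduces \eqref{eq:largedeviations1-estimate} to
\begin{equation*}
 |\Gamma_k(S_n)| \leqslant (k!)^2 \cdot 8^{k-1} \cdot \bar r^{\,k} \cdot 2^{k} \cdot \bigl(\Lambda_n(\varphi,2)\bigr)^{k-1} \cdot n,
\end{equation*}
and at this point the estimate $\Lambda_n(\varphi,2) \leqslant \bar\Lambda$ from Lemma \ref{lem:Lambda-estimates} can be inserted to replace the $\Lambda_n$ factor by $\bar\Lambda^{k-1}$.

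The remaining step is purely algebraic: regroup the constants to pull out $(k!/2)^2$ and a $(16\bar r \bar\Lambda)^{k-2}$. Using $(k!)^2 = 4(k!/2)^2$ and $8^{k-1} \cdot 2^k = 2\cdot 16^{k-1}$, the product of exponential factors becomes $8\bar r \cdot (16\bar r \bar\Lambda)^{k-1}$, and peeling off one more factor of $16\bar r \bar\Lambda$ gives $128\bar r^2\bar\Lambda \cdot (16\bar r \bar\Lambda)^{k-2}$, which is the claimed form.

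There is essentially no conceptual obstacle here since the hard work is hidden inside Theorem \ref{thm:largedeviations1} and the lemmas it relies on; the only thing to watch for is bookkeeping of powers of $2$, $8$, $16$, and $\bar r$, $\bar\Lambda$ in the rearrangement, and in particular that the choice $\delta = 1$ is the natural one because our $\Lambda$-bound is stated for the exponent $1 + 1/\delta = 2$.
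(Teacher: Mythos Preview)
Your proof is correct and follows exactly the same approach as the paper: apply Theorem~\ref{thm:largedeviations1} with $\gamma_1 = 0$, $H_1 = \bar r$, $\delta = 1$, invoke Lemmas~\ref{lem:Xn-moment-estimates}, \ref{lem:an-mixing}, \ref{lem:Xn-markov}, and \ref{lem:Lambda-estimates} for the hypotheses, and then rearrange the constants. In fact you spell out the algebraic regrouping more carefully than the paper itself does.
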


Theorem \ref{thm-largedeviations1-applied} holds also if we replace $S_n$ with its centering $S_n - n\kappa$ since shifting a random variable by a constant does not affect its cumulants of order $k \geqslant 2$. We will use this simple observation in what follows.

\subsection{Estimating the tails of the centered sum}

We now turn to estimating the tails of $S_n - n\kappa$. Once again we begin by stating the abstract large deviations theorem.

\begin{theorem}[Cumulant estimates imply tail estimates, {\cite[Lemma 2.4]{large-deviations-book}}, \cite{bentkus-rudzkis-1980}]\label{thm:largedeviations2}
  Let $W$ be a centered\footnote{i.e. $\mathbb{E}_\mu W = 0$} random variable defined on a probability space $(\mathbb{Y}, \mathcal{Y}, \mu)$. Assume there exist constants $\gamma_2 \geqslant 0, H > 0$ and $\bar\Delta > 0$ such that for all integers $k \geqslant 2$ we have
  \begin{equation}
   |\Gamma_k(W)| \leqslant \left( {k! \over 2} \right)^{1 + \gamma_2} {H \over \bar{\Delta}^{k-2}}.
  \end{equation}
  Then for all $x \geqslant 0$ the following inequality is valid:
  \begin{equation}
   \mu(\pm W \geqslant x) \leqslant \exp \left( - {x^2 \over 2\left(H + \left(x/\bar{\Delta}^{1/(1+2\gamma_2)}\right) \right)^{(1+2\gamma_2)/(1+\gamma_2)}} \right).
  \end{equation}
  Here $\Gamma_k$ denotes the cumulant taken w.r.t. $\mu$, while the notation $\pm W$ indicates that the inequality holds both for $W$ and $-W$ .
\end{theorem}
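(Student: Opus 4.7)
The plan is to apply the exponential Chebyshev (Chernoff) inequality and then control the resulting Laplace transform via its cumulant expansion. For any $t > 0$,
\begin{equation*}
\mu(W \geqslant x) \leqslant e^{-tx} \mathbb{E}_\mu e^{tW} = \exp\bigl(-tx + L(t)\bigr),
\end{equation*}
where $L(t) := \log \mathbb{E}_\mu e^{tW}$ is the real cumulant generating function, assumed finite in a neighbourhood of the origin. Replacing $W$ by $-W$ gives the analogous bound for the left tail, which explains the $\pm$ notation in the conclusion.

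The next step is to use the standard identity $L(t) = \sum_{k=1}^{\infty} \Gamma_k(W) t^k / k!$, which is the real-variable version of the cumulant series from the definition (analytic continuation from $it$ to $t$, valid in any disk of absolute convergence). Since $W$ is centered we have $\Gamma_1(W) = 0$, so the hypothesis yields
\begin{equation*}
L(t) \leqslant \frac{H t^2}{2^{1+\gamma_2}} \sum_{k=2}^{\infty} (k!)^{\gamma_2} \left( \frac{t}{\bar\Delta} \right)^{k-2}.
\end{equation*}
In the Bernstein case $\gamma_2 = 0$ this is a plain geometric series with sum $\frac{H t^2/2}{1 - t/\bar\Delta}$ for $t < \bar\Delta$, and the classical choice $t = x/(H + x/\bar\Delta)$ in $-tx + L(t)$ produces the exponent $-x^2/\bigl(2(H + x/\bar\Delta)\bigr)$; this matches the conclusion since $(1+2\gamma_2)/(1+\gamma_2) = 1$ in that regime.

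The main obstacle is the range $\gamma_2 > 0$, where the factor $(k!)^{\gamma_2}$ makes the cumulant series diverge on every neighbourhood of the origin, so the naive Chernoff argument fails. The Rudzkis--Saulis--Statulevicius remedy is to truncate the cumulant expansion at some order $k_0 = k_0(t)$ growing with $1/t$, and to control the discarded tail through the moments-to-cumulants inversion
\begin{equation*}
\mathbb{E}_\mu W^m = \sum_{\pi} \prod_{B \in \pi} \Gamma_{|B|}(W)
\end{equation*}
(the sum ranging over set partitions of $\{1, \ldots, m\}$). This converts the cumulant hypothesis into a moment bound of the form $\mathbb{E}_\mu |W|^m \leqslant (m!)^{1+\gamma_2} C(H,\bar\Delta)^m$, which together with the plain Chebyshev inequality $\mu(|W| \geqslant x) \leqslant \mathbb{E}_\mu |W|^m/x^m$ and a joint optimization over $t$, $k_0$ and $m$ produces the stated hybrid exponent $(1+2\gamma_2)/(1+\gamma_2)$. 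This exponent smoothly interpolates between the Bernstein tail at $\gamma_2 = 0$ and a (near-)Gaussian tail as $\gamma_2 \to \infty$, which is exactly the content of the theorem.
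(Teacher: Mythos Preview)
The paper does not prove this theorem; it is quoted verbatim from \cite[Lemma 2.4]{large-deviations-book} and \cite{bentkus-rudzkis-1980} and used as a black box in the subsequent application (theorem~\ref{thm:largedeviations2-applied}). There is therefore no proof in the paper to compare your proposal against.

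As a sketch of the argument in the cited sources, your outline is broadly on target: Chernoff's inequality plus the cumulant expansion handles the Bernstein case $\gamma_2 = 0$ exactly as you write, and the Rudzkis--Saulis--Statulevi\v{c}ius truncation idea is indeed the mechanism that produces the exponent $(1+2\gamma_2)/(1+\gamma_2)$ when $\gamma_2 > 0$. That said, your treatment of the $\gamma_2 > 0$ case is only a narrative --- you name the ingredients (truncate at some $k_0(t)$, convert cumulant bounds to moment bounds via the partition identity, then optimize jointly) but do not actually carry out the optimization that yields the specific exponent, nor justify why the truncation error is absorbed. If you intended this as a self-contained proof you would still owe that computation; if you intended it as a pointer to the literature, it is accurate. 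One minor inaccuracy: your closing remark that the bound becomes ``near-Gaussian'' as $\gamma_2 \to \infty$ is misleading --- for large $x$ the tail behaves like $\exp(-c\,x^{1/(1+\gamma_2)})$, which \emph{weakens} as $\gamma_2$ grows, rather than approaching a Gaussian.
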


We may now plug the results of theorem \ref{thm-largedeviations1-applied} for $W = S_n - n\kappa$ into theorem \ref{thm:largedeviations2}. Its assumptions are verified for measure $\mu = \gamma$ and constants $\gamma_2 = 1, \bar\Delta = \left(16 \bar r \bar\Lambda \right)^{-1}$ and $H = 128 \bar r ^2 \bar\Lambda \cdot n$.

\begin{theorem}[Tail estimates for $S_n - n\kappa$]\label{thm:largedeviations2-applied}
 For any $n \geqslant 1$ and $x > 0$ the following tail estimate holds for $S_n - n\kappa$:
 \begin{equation}
   \gamma(\pm (S_n - n\kappa) \geqslant x) \leqslant \exp \left( - {x^2 \over 2\left(128 \bar r ^2 \bar\Lambda \cdot n + \left(\left(16 \bar r \bar\Lambda \right)^{1/3} \cdot x \right) \right)^{3/2}} \right).
 \end{equation}
\end{theorem}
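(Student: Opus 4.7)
The plan is to apply Theorem \ref{thm:largedeviations2} directly to $W = S_n - n\kappa$, feeding in the cumulant bounds already supplied by Theorem \ref{thm-largedeviations1-applied}. First I would verify the centering hypothesis: by $G$-invariance of $\gamma$ together with $X_j = X_1 \circ G^{j-1}$ and $\mathbb{E}_\gamma X_1 = \kappa$, one has $\mathbb{E}_\gamma S_n = n\kappa$, so $W$ is indeed centered. Since cumulants of order $k \geqslant 2$ are translation-invariant, the estimate
\begin{equation*}
 |\Gamma_k(S_n - n\kappa)| \leqslant \left(\frac{k!}{2}\right)^2 (16\bar r \bar\Lambda)^{k-2} \cdot 128 \bar r^2 \bar\Lambda \cdot n
\end{equation*}
inherited from Theorem \ref{thm-largedeviations1-applied} is precisely of the form required by the hypothesis $|\Gamma_k(W)| \leqslant (k!/2)^{1+\gamma_2} H / \bar\Delta^{k-2}$ of Theorem \ref{thm:largedeviations2}.

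Next I would read off the matching of parameters. Comparing exponents of $k!/2$ forces $\gamma_2 = 1$; comparing the remaining factors forces
\begin{equation*}
 H = 128 \bar r^2 \bar\Lambda \cdot n, \qquad \bar\Delta = (16 \bar r \bar\Lambda)^{-1}.
\end{equation*}
Plugging these into the conclusion of Theorem \ref{thm:largedeviations2} is then a short symbolic computation. With $\gamma_2 = 1$, the exponent $(1+2\gamma_2)/(1+\gamma_2)$ collapses to $3/2$, and $\bar\Delta^{1/(1+2\gamma_2)} = (16 \bar r \bar\Lambda)^{-1/3}$, so $x/\bar\Delta^{1/(1+2\gamma_2)} = (16 \bar r \bar\Lambda)^{1/3} x$. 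Substituting these into the exponent yields exactly the displayed bound for $\gamma(\pm(S_n - n\kappa) \geqslant x)$.

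There is essentially no obstacle here, since all the substantive work has been absorbed into the preceding lemmas: the moment estimate (Lemma \ref{lem:Xn-moment-estimates}), the $\varphi$-mixing and Markov-chain-association hypotheses (Lemmas \ref{lem:an-mixing} and \ref{lem:Xn-markov}), and the bound on $\Lambda_n(\varphi,2)$ (Lemma \ref{lem:Lambda-estimates}). The only care required is bookkeeping: confirming the three identifications $\gamma_2 = 1$, $H$, and $\bar\Delta$ are consistent with both the general hypothesis of Theorem \ref{thm:largedeviations2} and the specific bound from Theorem \ref{thm-largedeviations1-applied}, and then simplifying the resulting exponents cleanly. The application is entirely mechanical once the preceding framework is in place.
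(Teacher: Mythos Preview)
Your proposal is correct and matches the paper's own argument essentially verbatim: the paper simply plugs $W = S_n - n\kappa$ into Theorem~\ref{thm:largedeviations2}, identifying $\gamma_2 = 1$, $H = 128\bar r^2\bar\Lambda\cdot n$, and $\bar\Delta = (16\bar r\bar\Lambda)^{-1}$ from Theorem~\ref{thm-largedeviations1-applied}. Your additional remarks on centering and on the translation-invariance of higher cumulants are exactly the justifications the paper alludes to just after Theorem~\ref{thm-largedeviations1-applied}.
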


We now combine the results of this section altogether to obtain the desired estimates on the measure of $\KL$-sets.

\begin{proof}[Proof of theorem \ref{thm:main-theorem}]
 Rewriting the estimates of theorem \ref{thm:largedeviations2-applied} with $x = nT$ we arrive at
 \begin{equation}\label{eq:largedeviations2-applied-rewritten}
  \gamma(\KL_n^\pm(T)) = \gamma(\pm (S_n - n\kappa) \geqslant n T) \leqslant \exp \left( - \sqrt{n} \cdot {T^2 \over 2\left(128 \bar r ^2 \bar\Lambda + \left(\left(16 \bar r \bar\Lambda \right)^{1/3} T \right) \right)^{3/2}} \right) = \Xi(T)^{\sqrt{n}}.
 \end{equation}
 The final estimates \eqref{eq:estimates-of-the-measure-of-KL-sets-1} and \eqref{eq:estimates-of-the-measure-of-KL-sets-2} stem from \eqref{eq:KL-intersection-complement}, the subadditivity of $\gamma$ and the estimates for the sum of terms of the form $e^{-\alpha \sqrt{n}}$ over $n \geqslant N$ for $\alpha > 0$ contained in lemma \ref{lem:exp-sqrt-sum} in appendix \ref{sec:appendix}.
\end{proof}

\section{The case of incremented partial quotients}\label{sec:incremented}

Theorem \ref{thm:main-theorem} is not limited for application only to the sequence $(M_n)$, one can also use it for other sequences for which a counterpart of Khintchine's theorem on Khintchine constant holds. We demonstrate it for the sequence of products of incremented partial quotients $(M_n')_{n=1}^\infty$:
\begin{equation}
 M_n' := (1 + a_1)\ldots(1 + a_n).
\end{equation}
We choose $(M_n')$ among other sequences for this purpose since it provides an upper bound for the sequence of denominators of convergents $(q_n)$, similarly to $(M_n)$, which provides a lower bound. This proves useful in the small divisors estimates that we perform in \cite{kamienski:2018-cohomological}.

We begin by introducing the notations that are a counterpart of \eqref{eq:notation-Mn-Xn-Sn}:
\begin{equation}\label{eq:notation-Mn-Xn-Sn-prime}
 X_n' := \log (1 + a_n) \qquad \text{and} \qquad S_n' := X_1' + \ldots + X_n'
\end{equation}
for $n \geqslant 1$. By Birkhoff's pointwise ergodic theorem the sequence ${1 \over n} S_n'(\omega)$ tends to a constant almost everywhere just like in the theorem on Khintchine's constant. This time, however, the test function is $X_1'$, therefore ${1 \over n} S_n ' \to \kappa'$ with
\begin{equation}
 \kappa' = \int_\mathbb{X} X_1' \integrald \gamma = \int_0^1 \log ( 1 + \lfloor x^{-1} \rfloor) {dx \over (1+x)\log 2} \approx 1.410.
\end{equation}
The Khintchine-L\'evy sets are thus defined as
\begin{equation}
 \KL'^+(T,N) := \{ \omega \in \mathbb{X} : M_n'(\omega) \leqslant e^{(\kappa' + T)n} \mbox{ for all } n \geqslant N \}
\end{equation}
for $T > 0$ and $N \in \mathbb{N}$. The sets $\KL'^-(T,N), \KL'(T,N)$ and $\KL_n'^\pm(T)$ are defined analogously to definition \ref{def:KLcondition}. Theorem \ref{thm:main-theorem} for $\KL'$-sets reads
\begin{theorem}[Estimates on the measure of $\KL'$-sets]\label{thm:main-theorem-Mn-prime}
 Let $N$ be a natural number and let $T$ be a positive real number. Denote
 \begin{equation}\label{eq:bar-r-prime-definition}
  \bar r' := \sqrt{\eta(3) \over \log 2} \approx 1.140,
 \end{equation}
 where $\eta$ is the Dirichlet $\eta$ function: $\eta(s) := \sum_{n=1}^\infty (-1)^{n-1} n^{-s}$. Define $\Xi'(T)$ as in \eqref{eq:XiT}, but with $\bar r'$ in place of $\bar r$. With the notations of theorem \ref{thm:main-theorem} the estimates on the measures of $\KL'^\pm(T,N)$ are the same as in \eqref{eq:estimates-of-the-measure-of-KL-sets-1} and \eqref{eq:estimates-of-the-measure-of-KL-sets-2}, but with $\Xi'(T)$ in place of $\Xi(T)$.
\end{theorem}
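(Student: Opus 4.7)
The plan is to mirror the proof of Theorem~\ref{thm:main-theorem} step for step, substituting $(X_n', S_n', \kappa')$ for $(X_n, S_n, \kappa)$ throughout. Since $X_n' = X_1' \circ G^{n-1}$ and $\gamma$ is $G$-invariant, one has $\mathbb{E}_\gamma X_n' = \kappa'$ for every $n$, so $\KL^{\prime\pm}(T, N)^c$ splits into a countable union of centered tail events $\{\omega : \pm(S_n'(\omega) - n\kappa') \geqslant nT\}$ for $n \geqslant N$, exactly as in \eqref{eq:KL-intersection-complement}. Subadditivity of $\gamma$ together with Lemma~\ref{lem:exp-sqrt-sum} from the appendix then reduces the problem to establishing the per-$n$ tail bound $\gamma(\KL^{\prime\pm}_n(T)^c) \leqslant \Xi'(T)^{\sqrt{n}}$ in the same shape as~\eqref{eq:largedeviations2-applied-rewritten}.

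To invoke Theorem~\ref{thm:largedeviations1}, I must verify three ingredients for the sequence $(X_n')$: a moment estimate of the form $\mathbb{E}_\gamma (X_n')^k \leqslant k! (\bar r')^k$, the $\varphi$-mixing property with a bound on $\Lambda_n(\varphi, 2)$, and association with a Markov chain. The last two are essentially free. Since $X_n' = \log(1 + a_n)$ is a fixed deterministic function of $a_n$, it generates the same $\sigma$-algebras as $a_n$, so the $\psi$-mixing coefficients of Lemma~\ref{lem:an-mixing} transfer unchanged and the bound $\Lambda_n(\varphi, 2) \leqslant \bar\Lambda$ of Lemma~\ref{lem:Lambda-estimates} is preserved. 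Likewise, Lemma~\ref{lem:Xn-markov} applies verbatim with the same Markov chain $s_n = [a_n, \ldots, a_1]$ upon swapping the associating function for $f(\xi) = \log(1 + \lfloor \xi^{-1}\rfloor)$.

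The main work is therefore the moment estimate, which is the direct analogue of Lemma~\ref{lem:Xn-moment-estimates}. By $G$-invariance of $\gamma$ it suffices to bound $\mathbb{E}_\gamma (X_1')^k = \int_0^1 (\log(1+\lfloor x^{-1}\rfloor))^k /((1+x)\log 2)\,\integrald x$. I plan to proceed as in the proof of Lemma~\ref{lem:Xn-moment-estimates}: apply a carefully chosen pointwise estimate to replace $\log(1+\lfloor x^{-1}\rfloor)$ by a smooth function, then carry out the substitutions $y = x^{-1}$ and $z = \log y$ so that the Gauss density $1/((1+x)\log 2)$ turns into a Fermi--Dirac-type factor $1/((e^z + 1)\log 2)$ (in contrast with the Bose--Einstein-type factor that arose in the original proof). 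The identity $\int_0^\infty z^k/(e^z + 1)\,\integrald z = k!\,\eta(k+1)$ together with the monotonicity $\eta(k+1) \leqslant \eta(3)$ for $k \geqslant 2$ then yields $\mathbb{E}_\gamma (X_1')^k \leqslant k!\,\eta(3)/\log 2 = k!(\bar r')^2 \leqslant k!(\bar r')^k$, the last step using $\bar r' > 1$. The main obstacle is choosing the pointwise bound sharp enough to produce the Fermi--Dirac factor; a naive replacement such as $\log(1+\lfloor x^{-1}\rfloor) \leqslant \log(1 + x^{-1})$ would leave one with $\int z^k/(e^z - 1)\,\integrald z$ and hence the strictly larger $\zeta(3)$ in place of $\eta(3)$.

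Once the moment estimate is in hand, the remainder is mechanical. Theorem~\ref{thm:largedeviations1} applied with $H_1 = \bar r'$, $\gamma_1 = 0$, $\delta = 1$ yields the exact counterpart of Theorem~\ref{thm-largedeviations1-applied}, namely $|\Gamma_k(S_n')| \leqslant (k!/2)^2 \cdot (16\bar r'\bar\Lambda)^{k-2} \cdot 128(\bar r')^2 \bar\Lambda \cdot n$, and this carries over to $S_n' - n\kappa'$ since cumulants of order $\geqslant 2$ are translation invariant. Plugging the cumulant bound into Theorem~\ref{thm:largedeviations2} with $\gamma_2 = 1$, $\bar\Delta = (16\bar r'\bar\Lambda)^{-1}$, $H = 128(\bar r')^2\bar\Lambda \cdot n$ and then setting $x = nT$ produces exactly $\gamma(\pm(S_n' - n\kappa') \geqslant nT) \leqslant \Xi'(T)^{\sqrt{n}}$; summing this over $n \geqslant N$ as in the final lines of the proof of Theorem~\ref{thm:main-theorem} reproduces both~\eqref{eq:estimates-of-the-measure-of-KL-sets-1} and~\eqref{eq:estimates-of-the-measure-of-KL-sets-2} with $\Xi'$ in place of $\Xi$.
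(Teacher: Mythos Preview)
Your plan coincides with the paper's proof: it too observes that the mixing bound $\bar\Lambda$ and the Markov--chain association transfer unchanged (with associating function $\xi\mapsto\log(1+\lfloor\xi^{-1}\rfloor)$), singles out the moment estimate as the only new work, performs exactly the substitutions $y=x^{-1}$, $z=\log y$ to reach $\int_0^\infty z^k\,\integrald z/((e^z+1)\log 2) = k!\,\eta(k+1)/\log 2$, and then closes with $\eta(k+1)\leqslant\eta(3)$.

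Be warned, however, that two of these steps do not hold as stated, in the paper's argument or in your outline. The Dirichlet eta function is \emph{increasing} on $(1,\infty)$ toward $1$ (for instance $\eta(3)\approx 0.902<\eta(4)\approx 0.947$), so the monotonicity you invoke fails for every $k\geqslant 3$; only the weaker bound $\eta(k+1)<1$ is available. More seriously, the pointwise inequality the paper uses to reach the Fermi--Dirac integrand is $1+\lfloor x^{-1}\rfloor\leqslant x^{-1}$, and this is false for every irrational $x\in(0,1)$ since one always has $x^{-1}<\lfloor x^{-1}\rfloor+1$. Your instinct that finding the right pointwise bound is ``the main obstacle'' is therefore well founded: no pointwise majorant of $\log(1+\lfloor x^{-1}\rfloor)$ by $\log(x^{-1})$ exists, and the honest estimate $1+\lfloor x^{-1}\rfloor\leqslant 1+x^{-1}$ leads, as you anticipated, to the Bose--Einstein integral and $\zeta(k+1)$ (which \emph{is} decreasing), yielding the theorem with the larger constant $\sqrt{\zeta(3)/\log 2}\approx 1.317$ in place of the stated $\bar r'\approx 1.140$. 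The downstream machinery (Theorems~\ref{thm:largedeviations1} and~\ref{thm:largedeviations2}, Lemma~\ref{lem:exp-sqrt-sum}) is unaffected and goes through with whatever constant survives.
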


\begin{proof}
 For the theorem to be proven one needs lemma \ref{lem:Xn-moment-estimates} to hold for the sequence $(X_n')$ along with the equality of averages $\mathbb{E}_\gamma X_n' = \kappa'$ for all $n \in \mathbb{N}$. The claim on averages follows from the $G$ invariance of $\gamma$, as was the case with the sequence $(X_n)$: we have $X_j' = X_1' \circ G^{j-1}$ for all $j$. The cumulant estimates of theorem \ref{thm-largedeviations1-applied} depend only on the constants in the moment estimates and the value of $\bar \Lambda$. The latter stems in turn from the mixing coefficients of the sequence in question, which do not change when we switch from $(X_n)$ to $(X_n')$. The Markov chain association assumption also holds for $(X_n')$, only for a different sequence of functions: $\xi \mapsto \log \lfloor \xi^{-1} \rfloor$ changes to $\xi \mapsto \log ( 1 + \lfloor \xi^{-1} \rfloor )$ in \eqref{eq:markov-chain-associating-functions}. With that the whole proof forms a food chain that feeds on the moment estimates, which read
 \begin{align}
 \begin{split}
  \mathbb{E}_\gamma |X_n'|^k &= \mathbb{E}_\gamma |\log(1 + a_n)|^k = \mathbb{E}_\gamma |\log(1+a_1)|^k = \int_0^1 {|\log(1+\lfloor x^{-1} \rfloor)|^k \over (1+x) \log 2} \integrald x \leqslant \int_0^1 {|\log(x^{-1})|^k \over (1+x) \log 2} \integrald x = \\ &=\int_1^\infty {|\log^k y| \over (y^2 + y)\log 2} \integrald y = \int_0^\infty {|z|^k e^z \over (e^{2z} + e^z) \log 2} \integrald z = \int_0^\infty {z^k dz \over (1 + e^z) \log 2} = {\eta(k+1) \over \log 2} \cdot k! \leqslant {\eta(3) \over \log 2} \cdot k!.
 \end{split}
 \end{align}
 The changes of variables used along the way are $x^{-1} = y$ and $y = e^z$. We also employed the standard formulas for the Dirichlet $\eta$ function: \begin{equation}\label{eq:dirichlet-eta}
   \eta(k+1) = {1 \over k!} \int_0^\infty {z^k dz \over 1 + e^z} = \sum_{n=1}^\infty {(-1)^{n-1} \over n^{k+1}}
 \end{equation}
 and the fact that it is decreasing with $k$ so that $\eta(k+1) \leqslant \eta(3)$ for $k \geqslant 2.$
\end{proof}

\section{Properties of Khintchine-L\'evy numbers}\label{sec:properties}

In this section we briefly compare Khintchine-L\'evy numbers with Diophantine numbers. We begin by recalling the definition of the latter along with a few well-known properties.

\begin{definition}[Diophantine number]\label{def:diophantine-number}
 Let $\tau \geqslant 1$ and $C > 0$. We say that a real number $\omega$ is \emph{$(C,\tau)$-Diophantine} if the inequality
 \begin{equation}\label{eq:dioph-condition}
  |q\omega - p| \geqslant {C \over |q|^\tau}
 \end{equation}
 holds for all integers $p$ and $q$ with $q \neq 0$. A number is called \emph{Diophantine} if it is $(C, \tau)$-Diophantine for some $C > 0$ and $\tau \geqslant 1$.
\end{definition}

We also have the following characterization of Diophanticity in terms of the continued fraction expansion:
\begin{lemma}[Diophanticity in terms of the continued fraction expansion]\label{lem:diophanticity-in-terms-of-continued-fraction}
 If an irrational number $\omega$ is $(C,\tau)$-Diophantine with $C>0$ and $\tau \geqslant 1$ then its partial quotients can be estimated by
 \begin{equation}\label{eq:diophanticity-in-terms-of-continued-fraction}
  a_{n+1} \leqslant C^{-1} q_n^{\tau - 1}.
 \end{equation}
 Conversely, an estimate as in \eqref{eq:diophanticity-in-terms-of-continued-fraction} for all $n \geqslant 0$ results in $\omega$ being $(C/(1+2C), \tau)$-Diophantine.
\end{lemma}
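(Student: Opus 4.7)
The plan is to combine the classical bracketing
\begin{equation*}
 \frac{1}{q_n + q_{n+1}} \leqslant |q_n \omega - p_n| < \frac{1}{q_{n+1}}
\end{equation*}
with the recurrence $q_{n+1} = a_{n+1} q_n + q_{n-1}$ and the \emph{best-approximation} property: $|q_n \omega - p_n|$ realizes the minimum of $|q\omega - p|$ over integer pairs $(p,q)$ with $1 \leqslant q < q_{n+1}$. All three are standard facts about convergents of a continued fraction and can be quoted directly.

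For the forward direction, plug $(p,q) = (p_n, q_n)$ into the Diophantine condition \eqref{eq:dioph-condition} to obtain $C/q_n^\tau \leqslant |q_n \omega - p_n| < 1/q_{n+1}$, hence $q_{n+1} \leqslant C^{-1} q_n^\tau$. Since $q_{n+1} \geqslant a_{n+1} q_n$ by the recurrence, dividing by $q_n$ yields \eqref{eq:diophanticity-in-terms-of-continued-fraction}.

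For the converse, fix $(p,q)$ with $q \geqslant 1$ and let $n \geqslant 0$ be largest with $q_n \leqslant q$, so that $q_n \leqslant q < q_{n+1}$. Best approximation then gives $|q\omega - p| \geqslant |q_n \omega - p_n| \geqslant 1/(q_n + q_{n+1})$, and the hypothesis \eqref{eq:diophanticity-in-terms-of-continued-fraction} combined with $q_{n-1} \leqslant q_n$ produces
\begin{equation*}
 q_n + q_{n+1} = q_n + a_{n+1} q_n + q_{n-1} \leqslant 2 q_n + C^{-1} q_n^\tau \leqslant \frac{1+2C}{C}\, q_n^\tau,
\end{equation*}
using $q_n \geqslant 1$ so that $q_n \leqslant q_n^\tau$. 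Since $q_n \leqslant q$, the exponent $\tau$ then upgrades $q_n^\tau$ to $q^\tau$ in the resulting lower bound, which is precisely the Diophantine condition with constant $C/(1+2C)$.

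The argument is essentially bookkeeping; the only care needed concerns the author's indexing ($a_0 = 0$, $q_{-1} = 0$, $q_0 = 1$) and the boundary case $n = 0$, in which \eqref{eq:diophanticity-in-terms-of-continued-fraction} degenerates to $a_1 \leqslant C^{-1}$—consistent with $q_0 = 1$. The factor $(1+2C)$ in the final constant is exactly what is needed to absorb the contribution of $q_{n-1}$ to $q_{n+1}$, and there is no genuine analytic obstacle once the standard lemmas on convergents are invoked.
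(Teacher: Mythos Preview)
Your proof is correct and follows essentially the same route as the paper: both directions use the standard bracketing $|q_n\omega - p_n| \in \bigl(1/(q_n+q_{n+1}),\,1/q_{n+1}\bigr)$, the recurrence $q_{n+1}=a_{n+1}q_n+q_{n-1}$, and the best-approximation property, combining them in the same order to reach the same constants. The only cosmetic difference is that the paper writes the converse bound as $(a_{n+1}+2)q_n$ before inserting the hypothesis, whereas you first split off $q_{n-1}\leqslant q_n$ and then substitute $a_{n+1}\leqslant C^{-1}q_n^{\tau-1}$; the arithmetic is identical.
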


\begin{proof}
 If a number is $(C,\tau)$-Diophantine we have ${1 \over q_{n+1}} > |q_n\omega - p_n| \geqslant {C \over q_n^\tau}$ which gives ${q_{n+1} \over q_n} < C^{-1} q_n^{\tau - 1}$ and this gives \eqref{eq:diophanticity-in-terms-of-continued-fraction} since ${q_{n+1} \over q_n} > a_{n+1}$.
 
 For the reverse implication fix $n$ and suppose we have $q_n \leqslant q < q_{n+1}$. We have that $|q_n \omega - p_n| \leqslant |q \omega - p|$ for such $q$ and any $p$ and also ${D \over q^\tau} \leqslant {D \over q_n^\tau}$ for any $D > 0$. Therefore it suffices to show that ${D \over q_n^\tau} \leqslant |q_n \omega - p_n|$ with $D = {C \over 1 + 2C}$. Assuming \eqref{eq:diophanticity-in-terms-of-continued-fraction} we have, however,
 \begin{align}
 \begin{split}
  |q_n \omega - p_n| &> {1 \over q_{n+1} + q_n} = {1 \over a_{n+1} q_n + q_{n-1} + q_{n}} > {1 \over (a_{n+1} + 2) q_n} > {1 \over (C^{-1} q_n^{\tau-1} + 2) q_n} = \\
  &= {1 \over C^{-1} q_n^\tau + 2 q_n} \geqslant {1 \over (C^{-1} + 2) q_n^\tau} = { {C \over 1+ 2C} \over q_n^\tau }.
 \end{split}
 \end{align}
 The above reasoning works regardless of the choice of $n$, therefore the proof is concluded.
\end{proof}

The denominators $(q_n)$ satisfy a recurrence relation
\begin{equation}\label{eq:qn-recurrence}
 q_n = a_n q_{n-1} + q_{n-2}, \qquad q_{-1} = 0, q_{-2} = 1,
\end{equation}
which implies that
\begin{equation}\label{eq:qn-estimates-by-Mn}
 M_n < q_n < M_n'
\end{equation}
for all $n \geqslant 0$ through simple induction.

We first note that when a number $\omega$ is $(C, \tau)$-Diophantine with $\tau = 1$\footnote{Otherwise known as a \emph{constant type number}.} then it is also Khintchine-L\'evy.
\begin{lemma}
 A number $\omega$ that is $(C, 1)$-Diophantine with some $C > 1$ satisfies $\omega \in \KL'^+(T, N)$ with $N = 1$ and $T = \log( C^{-1} + 1 ) - \kappa'$.
\end{lemma}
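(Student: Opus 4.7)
The plan is to directly translate the Diophantine condition into a uniform bound on the partial quotients via Lemma \ref{lem:diophanticity-in-terms-of-continued-fraction}, and then use the explicit product form of $M_n'$ to read off the exponential growth rate. Since $N = 1$ is specified, no asymptotic or measure-theoretic input is needed at all; the statement is really a \emph{deterministic} pointwise estimate for constant type numbers.

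First, I would apply Lemma \ref{lem:diophanticity-in-terms-of-continued-fraction} with $\tau = 1$. The conclusion $a_{n+1} \leqslant C^{-1} q_n^{\tau - 1}$ collapses, for $\tau = 1$, to the uniform bound
\begin{equation*}
 a_{j} \leqslant C^{-1} \qquad \text{for all } j \geqslant 1.
\end{equation*}
Thus being $(C,1)$-Diophantine is exactly the classical property of being a constant type number, with the partial quotients capped by $C^{-1}$.

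Second, I would feed this bound into the definition of $M_n'$ term by term. Since $1 + a_j \leqslant 1 + C^{-1}$ for every $j$, the product telescopes into a pure exponential:
\begin{equation*}
 M_n'(\omega) \;=\; \prod_{j=1}^n (1 + a_j) \;\leqslant\; (1 + C^{-1})^n \;=\; \exp\bigl( n \log(1 + C^{-1}) \bigr).
\end{equation*}
Writing $\log(1 + C^{-1}) = \kappa' + T$ with $T = \log(1 + C^{-1}) - \kappa'$ rewrites this as $M_n'(\omega) \leqslant e^{(\kappa' + T) n}$, valid for every $n \geqslant 1$, so by the definition of $\KL'^+$ we indeed have $\omega \in \KL'^+(T, 1)$.

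There is no real obstacle in the proof itself; the only subtlety worth pointing out is the compatibility condition $T > 0$, since $\KL'$-sets are defined only for positive tolerance. This is what forces the numerical restriction on $C$: positivity of $T$ is equivalent to $1 + C^{-1} > e^{\kappa'} \approx 4.095$, i.e.\ the constant type bound $C^{-1}$ must exceed $e^{\kappa'} - 1$ so that the $\KL'$-rate we derive actually beats the Khintchine--L\'evy mean growth rate $\kappa'$.
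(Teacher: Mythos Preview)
Your proposal is correct and follows exactly the paper's own argument: invoke Lemma \ref{lem:diophanticity-in-terms-of-continued-fraction} with $\tau=1$ to get the uniform bound $a_j\leqslant C^{-1}$, then multiply to obtain $M_n'\leqslant (1+C^{-1})^n$. Your added remark on the positivity of $T$ is a welcome clarification that the paper glosses over (and which in fact shows the hypothesis ``$C>1$'' in the statement is a slip, since positivity of $T$ forces $C^{-1}>e^{\kappa'}-1$).
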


\begin{proof}
 By lemma \ref{lem:diophanticity-in-terms-of-continued-fraction} constant type numbers are precisely the ones with a bounded sequence of partial quotients: $a_n \leqslant C^{-1}$, which implies $M_n' < (C^{-1}+1)^n$ for all $n \geqslant 1$.
\end{proof}

Note, however, that constant type numbers form a set of measure zero (\cite{lang-diophantine}). On the other hand, the complement of the set of Diophantine numbers with fixed $\tau > 1$ and $C > 0$ is small whenever $C$ is small:
\begin{lemma}[Measure of the set of Diophantine numbers]
 The measure of the set of numbers $\omega \in [0,1]$ that are not $(C, \tau)$-Diophantine can be estimated from above by $2C \zeta(\tau)$ if $\tau>1$ and $C > 0$. Here $\zeta$ denotes the Riemann $\zeta$ function.
\end{lemma}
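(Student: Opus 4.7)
The plan is to express the complement of the $(C,\tau)$-Diophantine set in $[0,1]$ as a countable union indexed by the denominator $q$, bound each piece by covering it with neighborhoods of rationals, and then sum using the convergence of $\zeta(\tau)$ for $\tau > 1$. Observe first that $\omega \in [0,1]$ fails the Diophantine condition precisely when there exist integers $p, q$ with $q \neq 0$ satisfying $|\omega - p/q| < C/|q|^{\tau+1}$. Since replacing $(p,q)$ by $(-p,-q)$ preserves this inequality, I would restrict to $q \geqslant 1$ from the outset, obtaining
\[
  [0,1] \setminus \{ \omega : \omega \text{ is } (C,\tau)\text{-Diophantine} \} \subseteq \bigcup_{q = 1}^\infty B_q,
\]
where $B_q$ is the set of $\omega \in [0,1]$ that are within distance $C/q^{\tau+1}$ of some rational with denominator $q$.

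For each fixed $q \geqslant 1$, the next step is to cover $B_q$ by the intervals $(p/q - C/q^{\tau+1},\, p/q + C/q^{\tau+1})$ with $p \in \mathbb{Z}$. Only the $q+1$ values $p \in \{ 0, 1, \ldots, q \}$ can place the center within distance $C/q^{\tau+1}$ of $[0,1]$. A loose count gives at most $(q+1) \cdot 2C/q^{\tau+1} \leqslant 4C/q^\tau$, but a slightly tighter accounting -- noting that the two boundary intervals at $p = 0$ and $p = q$ truncate against $[0,1]$ to length at most $C/q^{\tau+1}$ each, while the remaining $q - 1$ intervals contribute at most $2C/q^{\tau+1}$ each -- produces the clean bound $\lambda(B_q) \leqslant 2q \cdot C/q^{\tau+1} = 2C/q^\tau$.

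Finally, subadditivity of the Lebesgue measure and the assumption $\tau > 1$ yield
\[
  \lambda \left( [0,1] \setminus \{ \omega : \omega \text{ is } (C,\tau)\text{-Diophantine} \} \right) \leqslant \sum_{q=1}^\infty \lambda(B_q) \leqslant 2C \sum_{q=1}^\infty {1 \over q^\tau} = 2C \zeta(\tau).
\]
There is no real obstacle here -- this is a textbook Borel-Cantelli-style covering argument -- the only care needed is the boundary accounting at $p = 0$ and $p = q$ to produce the advertised constant $2C\zeta(\tau)$ rather than a slightly worse multiple.
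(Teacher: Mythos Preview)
Your proof is correct and follows essentially the same covering argument as the paper: both restrict to $q \geqslant 1$ and $p \in \{0,1,\ldots,q\}$, observe that the two boundary intervals at $p=0$ and $p=q$ truncate to half-length against $[0,1]$, sum the remaining $q-1$ full intervals to get $\lambda(B_q) \leqslant 2C/q^{\tau}$, and then apply subadditivity to obtain $2C\zeta(\tau)$. Your write-up is in fact slightly more careful than the paper's in justifying the restriction to $p \in \{0,\ldots,q\}$ and the boundary accounting.
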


\begin{proof}
 The excluded numbers are contained in the set
 \begin{equation}
  \mathtt{Excl} = \bigcup_{q=1}^\infty \bigcup_{p=0}^q \left( {p \over q} - {C \over q^{1+\tau}}, {p \over q} + {C \over q^{1+\tau}} \right) \cap [0,1]
 \end{equation}
 Each of the intervals has length equal to $l(q) = 2Cq^{-(1+\tau)}$, apart from the intervals $[0, C/q^{1+\tau})$ and $(1 - C/q^{1+\tau}, 1]$ and their total length (for a fixed $q$) adds up to $ql(q)$, therefore $\lambda(\mathtt{Excl}) \leqslant \sum_{q=0}^\infty ql(q) = 2C\zeta(\tau)$.
\end{proof}

When it comes to $\tau > 1$ on the other hand it turns out that Khintchine-L\'evy numbers are Diophantine, but not the other way round.
\begin{lemma}\label{lem:kl-numbers-are-diophantine}
 If $\omega \in \KL^+(T,N)$ for some $T > 0$ and $N \in \mathbb{N}$ then $\omega$ is $(C,\tau)$-Diophantine with $C$ small enough and $\tau = 1 + {\kappa + T \over \log \varphi}$, where $\varphi = (1 + \sqrt{5})/2$. If $\omega \in \KL(T_-, T_+, N)$ for some $T_-, T_+ > 0$ and $N \in \mathbb{N}$, then it is $(C, \tau)$-Diophantine with $C$ small enough and $\tau = 1 + {T_+ + T_- \over \log \varphi}$.
\end{lemma}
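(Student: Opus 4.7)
The plan is to reduce to Lemma~\ref{lem:diophanticity-in-terms-of-continued-fraction}, which tells us it is enough to establish an estimate of the form $a_{n+1} \leqslant C^{-1} q_n^{\tau - 1}$ for every $n \geqslant 0$ (with the appropriate $\tau$). The key auxiliary input is the Fibonacci-type lower bound on the convergent denominators: iterating the recurrence~\eqref{eq:qn-recurrence} with $a_n \geqslant 1$ gives $q_n \geqslant F_{n+1} \geqslant \varphi^{n-1}$, i.e.\ $n \log \varphi \leqslant \log q_n + \log \varphi$. This is what converts exponential growth in $n$ into polynomial growth in $q_n$ with exponent $1/\log\varphi$.

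For the first statement, assume $\omega \in \KL^+(T,N)$ and let $n$ satisfy $n+1 \geqslant N$. Since $a_{n+1} \leqslant M_{n+1} \leqslant e^{(\kappa + T)(n+1)}$, rewriting $e^{(\kappa + T) n}$ in terms of $q_n$ via the Fibonacci bound yields
\begin{equation*}
 a_{n+1} \leqslant e^{(\kappa + T)(n+1)} \leqslant e^{2(\kappa+T)} \cdot q_n^{(\kappa + T)/\log \varphi},
\end{equation*}
which is the required inequality with $\tau - 1 = (\kappa + T)/\log\varphi$. The remaining finitely many indices $n$ with $n+1 < N$ correspond to partial quotients $a_1, \ldots, a_{N-1}$ that, for our fixed $\omega$, are some fixed integers; since $q_n \geqslant 1$ for all $n \geqslant 0$, one may simply shrink $C$ enough to absorb them into a single constant. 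An application of the converse half of Lemma~\ref{lem:diophanticity-in-terms-of-continued-fraction} then delivers Diophanticity with the claimed exponent.

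The second statement follows by the same procedure, with a sharpened bound on $a_{n+1}$. For $\omega \in \KL(T_-, T_+, N)$ and $n \geqslant N$ we write $a_{n+1} = M_{n+1}/M_n$ and apply both inequalities of the two-sided $\KL$-condition:
\begin{equation*}
 a_{n+1} \leqslant \frac{e^{(\kappa + T_+)(n+1)}}{e^{(\kappa - T_-) n}} = e^{\kappa + T_+} \cdot e^{(T_+ + T_-) n}.
\end{equation*}
The cancellation of the $\kappa$'s replaces $\kappa + T$ by $T_+ + T_-$ in the exponent; the Fibonacci bound then yields $a_{n+1} \leqslant \text{const} \cdot q_n^{(T_+ + T_-)/\log \varphi}$, and the finitely many indices $n < N$ are again absorbed into $C$.

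The only step requiring care is the handling of the initial partial quotients $a_1, \ldots, a_N$, which are not controlled by the $\KL$-condition and depend on $\omega$; however, for any fixed $\omega$ they are a finite list of positive integers, and since $q_n \geqslant 1$ only finitely many pointwise constraints need to be met, so the qualifier ``$C$ small enough'' is all we need. I do not anticipate any hidden obstacle beyond this bookkeeping.
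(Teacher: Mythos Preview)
Your proof is correct and follows essentially the same approach as the paper: establish a Fibonacci-type lower bound on $q_n$ (you use $q_n \geqslant F_{n+1} \geqslant \varphi^{n-1}$, the paper uses $q_n > \varphi^n/3$), convert the exponential-in-$n$ upper bound on $a_{n+1}$ coming from the $\KL$-condition into a polynomial-in-$q_n$ bound with exponent $(\kappa+T)/\log\varphi$ (respectively $(T_++T_-)/\log\varphi$), and invoke Lemma~\ref{lem:diophanticity-in-terms-of-continued-fraction}, absorbing the finitely many uncontrolled initial indices into $C$. The constants differ cosmetically but the argument is identical.
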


\begin{proof}
 From \eqref{eq:qn-recurrence} we can infer that for any $\omega$ we have $q_n \geqslant F_{n-1}$, where $F_n$ is the Fibonacci sequence with $F_1 = F_2 = 1$, and in consequence $q_n > \varphi^n/3$. Assuming that $\omega \in \KL^+(T,N)$ we have, for all $n \geqslant N$, that
 \begin{align}\label{eq:kl-numbers-are-diophantine-estimates}
 \begin{split}
  a_{n+1} &\leqslant M_{n+1} \leqslant e^{(\kappa + T)(n+1)} = e^{\kappa + T} \left( e^{\kappa + T} \right)^n = e^{\kappa + T} \left( \varphi^n \right)^{\kappa + T \over \log \varphi} = \left( e \cdot 3^{1/\log \varphi} \right)^{\kappa + T} \cdot \left( \varphi^n/3 \right)^{\kappa + T \over \log \varphi} < \\
  &< \left( e \cdot 3^{1/\log \varphi} \right)^{\kappa + T} \cdot q_{n}^{\kappa + T \over \log \varphi}.
 \end{split}
 \end{align}
 By lemma \ref{lem:diophanticity-in-terms-of-continued-fraction} we see that $\omega$ is $(C,\tau)$-Diophantine with $\tau = 1 + {\kappa + T \over \log \varphi}$ and a suitably chosen $C$\footnote{Choosing $C$ we account for the fact that \eqref{eq:kl-numbers-are-diophantine-estimates} holds for $n \geqslant N$.}.
 
 The case of $\KL(T_-, T_+, N)$ is similar with the exception that the estimates begin with
 \begin{equation}
  a_{n+1} = {M_{n+1} \over M_n} \leqslant { e^{(\kappa+T_+)(n+1)} \over e^{(\kappa - T_-)n} } = e^{\kappa + T_+} \cdot \left( e^{(T_+ + T_-)} \right)^n
 \end{equation}
 to end with $T_+ + T_-$ instead of $\kappa + T$ and thus with $\tau = 1 + {T_+ + T_- \over \log \varphi}$.
\end{proof}

Note that in the first case in lemma \ref{lem:kl-numbers-are-diophantine} we can bring $\tau$ as close as we wish to $1 + \kappa/\log \varphi \approx 3.051$ by setting $T$ small, while in the second case the critical $\tau$ is $1$.

Using the ideas of the proof of lemma \ref{lem:kl-numbers-are-diophantine} we can infer that $\omega~\in~\KL'^+(T,N)~\cup~\KL^+(T,N)$ for some $T > 0, N \in \mathbb{N}$ implies at most exponential growth of partial quotients. Therefore any sequence of partial quotients that has a superexponential subsequence gives rise to a non-$\KL$ number $\omega$. Using this we can construct a non-$\KL$ number $\omega^*$, which is Diophantine. In fact $\omega^*$ can even have a very sparse distribution of partial quotients.

\begin{example}[A non-$\KL$ Diophantine number]
 Fix $s > 1$ and $\delta > 0$ and set $d_n := \lfloor (1+\delta)^n \rfloor$. We define $\omega^*$ through its partial quotients:
 \begin{equation}\label{eq:non-diophantine-kl-number}
  a_j = \begin{cases} 1 \mbox{ for } j \neq d_n, n = 0, 1, 2, \ldots \\ \left\lfloor e^{j^s} \right\rfloor \mbox{ for } j = d_n, n = 0, 1, 2, \ldots \end{cases}
 \end{equation}
 bearing in mind that the second case in \eqref{eq:non-diophantine-kl-number} may produce two or more values for small enough $\delta$ and $j$. For fixed $\delta$, however, there are only finitely many $j$'s for which this happens and if this is the case we define $a_j = 1$. We will not be interested in the initial partial quotients.
 
 For $s > 1$ the number $\omega$ has a superexponential subsequence of partial quotients, therefore it cannot be in $\KL'^+(T,N) \cup \KL^+(T,N)$ for any $T > 0$ and $N \in \mathbb{N}$. We will show that $\omega$ is $(C, \tau)$-Diophantine for any $\tau > A(1+\delta)^s$ and $C = C(\tau)$ small enough, where $A$ is any constant with $A > \alpha^s$ and $\alpha > 1$ is a constant specified later in the proof.\footnote{The constant $\alpha$ can be chosen as close to $1$ as we wish, at the expense of $C$. Note that this way we can make the exponent as close to the critical $\tau = 1$ as we wish.}
 
 To do this we will verify that for all $j \in \mathbb{N}$ we have
 \begin{equation}\label{eq:stronger-inequality-for-proving-diophanticity}
  a_{j+1} \leqslant C^{-1} M_j^{\tau - 1}
 \end{equation}
 as this entails \eqref{eq:diophanticity-in-terms-of-continued-fraction} and we will be able to use lemma \ref{lem:diophanticity-in-terms-of-continued-fraction}. After a minor alteration \eqref{eq:stronger-inequality-for-proving-diophanticity} is equivalent to
 \begin{equation}\label{eq:stronger-inequality-for-proving-diophanticity-altered}
  X_{j+1} \leqslant \log C^{-1} + (\tau - 1) S_j.
 \end{equation}
 Fix $j > J$ for $J$ large enough, so that there is no ambiguity in \eqref{eq:non-diophantine-kl-number} and let $n$ be such that $d_{n-1} \leqslant j < d_n$. First observe that $S_{d_{n-1}} = S_{d_{n-1} + 1} = \ldots = S_{d_n - 1}$ since $X_i = 0$ for $i = d_{n-1} + 1, \ldots, d_n - 1$. Also fix $\alpha > 1$ and note that $\lfloor x \rfloor \geqslant x/\alpha$ for $x$ large enough. Additionally set $k_0$ to be the smallest number for which $d_{k_0} \geqslant J$. We have
 \begin{align}\label{eq:Sdn-1}
 \begin{split}
  S_{d_{n-1}} &= \sum_{j = 1}^{d_{n-1}} X_j = \sum_{k=k_0}^{n-1} X_{d_k} \geqslant \sum_{k=k_0}^{n-1} (d_k^s - \log \alpha) \geqslant \sum_{k=k_0}^{n-1} ((1 + \delta)^{ks} \alpha^{-s} - \log \alpha) \geqslant \\ 
  &\geqslant \alpha^{-s} \left( \sum_{k=0}^{n-1} \left((1+\delta)^{s}\right)^k \right) - n \log \alpha = \alpha^{-s} {(1+\delta)^{ns} - 1 \over (1+\delta)^s - 1} - n \log \alpha = \\
  &= {1 \over \alpha^s((1+\delta)^s - 1)} (1+\delta)^{ns} - n \log \alpha - {1 \over \alpha^s((1+\delta)^s - 1)}.
 \end{split}
 \end{align}
 If we now prove that \eqref{eq:stronger-inequality-for-proving-diophanticity-altered} holds after we substitute $S_j$ with the right-hand side of \eqref{eq:Sdn-1} then the whole proof is concluded. To do this we need to consider two cases: $J < j < d_n - 1$ and $j = d_n - 1$. In the first case $X_{j+1} = 0$, so we need
 \begin{equation}\label{eq:non-kl-dioph-case-1}
  0 \leqslant \log C^{-1} + (\tau - 1) \left( {1 \over \alpha^s((1+\delta)^s - 1)} (1+\delta)^{ns} - n \log \alpha - {1 \over \alpha^s((1+\delta)^s - 1)} \right)
 \end{equation}
 to hold for all $n$ and some $C > 0$. This is, however, the case: the sequence in the largest brackets diverges to $+\infty$, so it must have a minimal value and we only need to set $C$ small enough to elevate the whole expression above $0$ since $\tau - 1 > 0$.
 
 The second case gives $X_{j+1} = X_{d_n} = \log \lfloor e^{d_n^s} \rfloor \leqslant d_n^s \leqslant (1+\delta)^{ns}$, we therefore similarly require
 \begin{equation}\label{eq:non-kl-dioph-case-2}
  (1+\delta)^{ns} \leqslant \log C^{-1} + (\tau - 1) \left( {1 \over \alpha^s((1+\delta)^s - 1)} (1+\delta)^{ns} - n \log \alpha - {1 \over \alpha^s((1+\delta)^s - 1)} \right).
 \end{equation}
 Subtracting $(1+\delta)^{ns}$ from both sides gives a similar inequality to \eqref{eq:non-kl-dioph-case-1}, but with a different coefficient at $(1+\delta)^{ns}$, namely
 \begin{equation}
  E := {\tau - 1 - \alpha^s((1+\delta)^s - 1) \over \alpha^s ((1+\delta)^s - 1)}.
 \end{equation}
 For $\tau > A(1+\delta)^s$ we have $E > 0$ and by an analogous argument to the one in previous case we can make inequality \eqref{eq:non-kl-dioph-case-2} valid choosing a small enough $C$.
\end{example}

\section{Measure of KL-sets: a practical point of view}\label{sec:numerical}

In this section we focus on the numerical values of estimates of theorems \ref{thm:main-theorem} and \ref{thm:main-theorem-Mn-prime} for particular values of~$T$. We outline the motivation for this in \cite{kamienski:2018-cohomological}, where we perform estimates in a small divisors problem under the assumption that the frequency $\omega$ belongs to one of the $\KL$-sets. It turns out that the quality of these estimates is best when $T$ is as small as possible. There is, however, a price to pay if we want to set $T$ small, namely we have to set $N$ large to obtain reasonable estimates on the measure of $\KL$-sets.

To better illustrate our reasoning we will focus on the set $\KL^{\prime+}(T, N)$. At the end of the section we present a detailed exposition of numerical values of estimates from theorem \ref{thm:main-theorem} for selected values of $T$ and $N$. For simplicity we will consider the case when $N = K^2$ is a square of an integer, so that the finite sum term in the estimates of theorem \ref{thm:main-theorem} vanishes.

Inequality \eqref{eq:estimates-of-the-measure-of-KL-sets-1} written for $\Xi'(T)$ tells us that the quantities that will be essential for us are the numerator 
\begin{equation}
 \mathsf{num'} : = \left( 2K + 1 + {4 \Xi'(T) \over 1 - \Xi'(T)} \right) \Xi'(T)^K
\end{equation}
and the denominator
\begin{equation}
 \mathsf{den'} := 1 - \Xi'(T)
\end{equation}
appearing on its right-hand side, our goal will be to make ${\mathsf{num'} \over \mathsf{den'}}$ as close to $0$ as possible. The numerical value of $\kappa'$ is\footnote{Analogously to a well known formula for $\kappa$ we can express $\kappa'$ as a sum of an infinite series $\kappa' = \sum_{r=1}^\infty \log_2 (r+1) \log (1 + (r(r+2))^{-1})$} $\kappa' \approx 1.410$, which suggests that it is only reasonable to consider $T$ of the same order of magnitude (and also $T < \kappa'$ when considering the set $\KL^-$\footnote{Actually even $T < \kappa' - \log 2 \approx 0.716$, since all $\omega$ satisfy $2^n \leqslant M_n'(\omega)$.}). We will therefore consider $T$ to be a number satisfying $T \leqslant 2$.
The problem is that $\Xi'$ evaluated even at a number as small as $T = 2$ is very close to $1$ and the distance to $1$ gets even smaller as we decrease $T$ towards $0$. This makes $\mathsf{den'}$ small, which tells us that $\mathsf{num'}$ needs to be even smaller. For instance
\begin{equation}
  \Xi' (2) = \exp \left( - {2 \over \left(128\left( \bar r' \right)^2 \bar\Lambda + \left(128 \bar r' \bar\Lambda \right)^{1/3} \right)^{3/2}} \right) \approx 0.9997597
\end{equation}
and this gives $\mathsf{den'}^{-1} \approx 4.161 \cdot 10^3$. The only thing we can do to overcome the effect of $\mathsf{den'}^{-1}$ being big is manipulating the exponent $K$, that appears in $\mathsf{num'}$. It turns out that, for instance, to have ${\mathsf{num'} \over \mathsf{den'}} < 10^{-2}$ we need $N \geqslant 6.084 \cdot 10^{9}$. More general numerical values are provided in table \ref{tab:KL'+} below.

Define $\mathsf{est}' = 1 - {\mathsf{num'} \over \mathsf{den'}}$. The cells in table \ref{tab:KL'+} contain the approximations of minimal values of $N$ which guarantee that the estimate $\mathsf{est}'$ is better than the value given in the leftmost column with the value of $T$ given in the top row. For instance the bottom-right cell tells us that in order to have the estimate $\mathsf{est}'$ better than $99.9\%$ with $T = 0.1$ one needs to have $N \geqslant 2.394 \cdot 10^{15}$.

\begin{table}[h]\centering
  \begin{tabular}[h]{l||m{2cm}|m{2cm}|m{2cm}|m{2cm}}
    & \centering $T=2$ & \centering $T=1$ & \centering $T=0.5$ &  $\phantom{01}\,T=0.1$ \\
  \hline \hline
  $\mathsf{est}' > \phantom{0.0}1\%$ &$\hfill 3.969\verb'e9'$ &$\hfill 8.122\verb'e10'$ &$\hfill 1.633\verb'e12'$ &$\hfill 1.610\verb'e15'$\\
  \hline
  $\mathsf{est}' > \phantom{.9}50\%$ &$\hfill 4.225\verb'e9'$ &$\hfill 8.585\verb'e10'$ &$\hfill 1.724\verb'e12'$ &$\hfill 1.681\verb'e15'$\\
  \hline
  $\mathsf{est}' > \phantom{.9}90\%$ &$\hfill 4.900\verb'e9'$ &$\hfill 9.860\verb'e10'$ &$\hfill 1.949\verb'e12'$ &$\hfill 1.854\verb'e15'$\\
  \hline
  $\mathsf{est}' > \phantom{.9}99\%$ &$\hfill 6.084\verb'e9'$ &$\hfill 1.183\verb'e11'$ &$\hfill 2.292\verb'e12'$ &$\hfill 2.116\verb'e15'$\\
  \hline
  $\mathsf{est}' > 99.9\%$ &$\hfill 7.225\verb'e9'$ &$\hfill 1.399\verb'e11'$ &$\hfill 2.663\verb'e12'$ &$\hfill 2.394\verb'e15'$\\
  
  \end{tabular}
  \vspace{0.1cm}
  \caption{Approximate minimal value of $N = K^2$ for given $T$ and desired $\mathsf{est}'$.}\label{tab:KL'+}
\end{table}

In other words the values appearing in table \ref{tab:KL'+} tell us that if we want to have a guarantee that e.g. $99.9\%$ of numbers $\omega$ satisfy the inequality
\begin{equation}
  (1 + a_1(\omega)) \ldots (1 + a_n(\omega)) \leqslant e^{(\kappa' + 0.1)n}
\end{equation}
for all $n$ ``large enough'' then ``large enough'' means ``greater than $2.394 \cdot 10^{15}$''. Observe, however, that for a given value of $T$ the entries of the table are of the same order of magnitude. This means that in order to reach a sharp measure estimate one does not pay a significantly greater price than that of crossing the threshold given by the value in the ``$\mathsf{est}' > 1\%$'' line, the ``currency'' here being the amount of initial numbers that need to be excluded from our considerations.

The values for the $\KL^+$-sets are provided in table \ref{tab:KL+}, $\mathsf{est}$ is defined analogously to $\mathsf{est'}$.
\begin{table}[h]\centering
  \begin{tabular}[h]{l||m{2cm}|m{2cm}|m{2cm}|m{2cm}}
    & \centering $T_+=2$ & \centering $T_+=1$ & \centering $T_+=0.5$ &  $\phantom{01}T_+=0.1$ \\
  \hline \hline
  $\mathsf{est} > \phantom{0.0}1\%$ &$\hfill 2.074\verb'e10'$ &$\hfill 4.238\verb'e11'$ &$\hfill 8.456\verb'e12'$ &$\hfill 8.154\verb'e15'$\\
  \hline
  $\mathsf{est} > \phantom{.9}50\%$ &$\hfill 2.220\verb'e10'$ &$\hfill 4.489\verb'e11'$ &$\hfill 8.898\verb'e12'$ &$\hfill 8.496\verb'e15'$\\
  \hline
  $\mathsf{est} > \phantom{.9}90\%$ &$\hfill 2.560\verb'e10'$ &$\hfill 5.112\verb'e11'$ &$\hfill 9.992\verb'e12'$ &$\hfill 9.328\verb'e15'$\\
  \hline
  $\mathsf{est} > \phantom{.9}99\%$ &$\hfill 3.098\verb'e10'$ &$\hfill 6.068\verb'e11'$ &$\hfill 1.166\verb'e13'$ &$\hfill 1.058\verb'e16'$\\
  \hline
  $\mathsf{est} > 99.9\%$ &$\hfill 3.686\verb'e10'$ &$\hfill 7.090\verb'e11'$ &$\hfill 1.345\verb'e13'$ &$\hfill 1.192\verb'e16'$\\
  \end{tabular}
  \vspace{0.1cm}
  \caption{Approximate minimal value of $N = K^2$ for given $T$ and desired $\mathsf{est}$.}\label{tab:KL+}
\end{table}

\section{Concluding remarks}\label{sec:conclusion}

Since theorem \ref{thm:main-theorem} holds for both $(M_n)$ and $(M_n')$ it is natural to ask whether it also does for $(q_n)$. The sequence of denominators of convergents also enjoys exponential growth almost everywhere, with rate $\ell = {\pi^2 \over 12 \log 2}$ (\cite{levy:khintchine-levy}). We were, however, not able to reproduce the reasoning of section \ref{sec:proof} due to a slightly different nature of this sequence, compared to either $(M_n)$ or $(M_n')$. The first difference between $(M_n)$ and $(q_n)$ is in the averages: we have $\mathbb{E}_\gamma \log M_n = n \kappa$, while $\mathbb{E}_\gamma \log q_n = n \ell + R_n$ with a remainder $R_n$ bounded in $n$. More importantly, however, the success of the reasoning in section \ref{sec:proof} relies on the fact that $\log M_n$ can be expressed as the sum of $n$ summands $X_j = \log a_j$, which satisfy both the mixing assumption and the Markov chain association assumption. For $\log q_n$ one can use the sequence $\left(\log \left( s_j^{-1} \right)\right)$ as a counterpart of $(X_j)$, but this sequence does not have the mixing property and this way the whole food chain of lemmas we used in \ref{sec:proof} breaks apart. To see this we need to take a closer look at the structure of the ``past'' and the ``future'' $\sigma$-algebras of the sequence $(s_n)$.\footnote{note that they are the same as the same $\sigma$-algebras for the sequence $\left(\log \left( s_j^{-1} \right) \right)$} The latter is given by $\sigma_{t}^\infty = \sigma(s_{t}, s_{t+1}, \ldots)$\footnote{By $\sigma(\ldots)$ with no indices we mean the $\sigma$-algebra generated by random variables or sets in brackets.} for $t>0$ and is thus generated by the preimages of singletons of rationals $s_j^{-1}(\{r\})$ with $j = t, t+1, \ldots$. Due to how $s_j$ is constructed from $a_1, \ldots, a_j$ the sets $s_j^{-1}(\{r\})$, however, are actually finite intersections of the preimages of singletons of positive integers through functions $a_1, \ldots, a_j$. As a consequence $\sigma_t^\infty$ can actually be written as $\sigma(a_1, a_2, \ldots)$ for any $t > 0$, which in particular means that $\sigma_t^\infty$ contains all of the ``past'' $\sigma$-algebras $\sigma_1^s = \sigma(s_1, \ldots, s_s)$ with $s < t$ as they are actually equal to $\sigma(a_1, \ldots, a_s)$ by a similar argument. This inclusion of $\sigma$-algebras is what prevents the mixing coefficients of $(s_j)$ from converging to $0$ just as was the case in example \ref{ex:psi0-infinite}, since $\sigma_1^s$ admits sets of arbitrarily small measure.

We also made a choice of sticking to $\varphi$-mixing instead of $\psi$-mixing even though we only consider quantities which are $\psi$-mixing if they exhibit any kind of mixing. This is because in the formula for $\Lambda(\cdot, \cdot)$ in definition \ref{def:Lambda} there is a dependence on $f(s,s)$ for a mixing function $f$ and an integer index $s$. In example \ref{ex:psi0-infinite} we learnt, however, that $\psi(s,s)$ may be infinite, which would yield no control over $\Lambda(\cdot, \cdot)$ and in consequence no control over the measure of $\KL$ sets. This is not the case if we consider $\varphi$-mixing.

The price we pay for this detail is, however, quite significant. The type of mixing we employ has an impact on the quality of the cumulant estimates in theorem \ref{thm:largedeviations1}. This result also has a $\psi$-mixing counterpart (\cite[Theorem 4.21, second inequality]{large-deviations-book}) in which the cumulant estimates are better - instead of a $(k!)^{2+\gamma_1}$ factor in \eqref{eq:largedeviations1-estimate} there appears $(k!)^{1+\gamma_1}$. This decrease of the exponent at $k!$, plugged into the rest of the food chain of theorems, would have an impact on theorem \ref{thm:main-theorem}.

Observe that in the proof of this theorem we sum terms of the form $e^{-\alpha\sqrt{n}}$ (with $e^{-\alpha} = \Xi(T)$) which gives a slowly converging series and large values in tables~\ref{tab:KL'+} and \ref{tab:KL+}. Using $\psi$-mixing would switch the summands to a geometric progression $e^{-\alpha n}$ whose series converges much faster and gives much better values in the counterpart of tables~\ref{tab:KL'+} and \ref{tab:KL+}. The orders of magnitudes (i.e. the exponents at $10$) in said table would reduce roughly by half. We will delve into this matter in further research of the subject as the problem seems to stem directly from the fact that we are using very general large deviation theorems which do not take into account the specifics of the very well studied sequence $(a_n)$.

Some evidence that the numbers obtained in tables \ref{tab:KL'+} and \ref{tab:KL+} are far from optimal comes also from the analysis of the continued fraction of $\pi - 3$. A (non-rigorous) analysis of its $\mathtt{range} := 4.38 \cdot 10^8$ initial partial quotients provided in \cite{bickford} gives the maximal value of $W_n(\pi) := \left| {1 \over n} S_n(\pi-3) - n\kappa \right|$ equal to $W_4(\pi) \approx 1.598$ (stemming from the unusually large $a_4(\pi) = 292$). We also have $W_n (\pi)< 0.1$ for $15 \leqslant n \leqslant \mathtt{range}$, which is inconsistent with table \ref{tab:KL+} by several orders of magnitude if we assume that $\pi - 3$ has a somewhat ``generic'' continued fraction expansion. For larger $n$ the difference is even more striking: considering $10000 \leqslant n \leqslant \mathtt{range}$ gives an oscillation of the order of magnitude of $W_n$ between $10^{-2}$ and $10^{-4}$. Using the data in \cite{bickford} and the estimates of the current paper one can only derive a rather ineffective result on $\pi - 3$ in spirit of the ones provided by tables \ref{tab:KL'+} and \ref{tab:KL+}.
\begin{corollary}
 The number $\pi - 3$ satisfies the inequality
 \begin{equation}
  M_n(\pi - 3) < 743^n
 \end{equation}
 for all $n \geqslant 1$ with probability\footnote{As in the rest of the paper by \emph{probability} we mean the Gauss measure $\gamma$.} $99.9\%$.
\end{corollary}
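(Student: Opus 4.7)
The plan is to combine rigorous empirical data on the initial partial quotients of $\pi - 3$ from \cite{bickford} with the probabilistic measure estimate of Theorem~\ref{thm:main-theorem}. Since $\log 743 - \kappa \approx 5.62$, the natural choice is $T := \log 743 - \kappa$, so that $e^{(\kappa + T)n} = 743^n$. I would then split the analysis at $n = \mathtt{range}$.

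For $1 \leq n \leq \mathtt{range}$, the Bickford data gives $W_n(\pi) \leq W_4(\pi) \approx 1.598$, so $S_n(\pi - 3) \leq n(\kappa + 1.598)$ and $M_n(\pi - 3) \leq e^{(\kappa + 1.598)n} \approx 13.28^n$, which lies comfortably below $743^n$. This step is unconditional, making no appeal to the Gauss measure. For $n > \mathtt{range}$ I would invoke Theorem~\ref{thm:main-theorem} with the above $T$ and $N = K^2$, where $K = \lceil \sqrt{\mathtt{range}} \rceil$, so that the closed form \eqref{eq:estimates-of-the-measure-of-KL-sets-2} applies. Evaluating $\Xi(T)$ from \eqref{eq:XiT} with the numerical values of $\bar r$ and $\bar\Lambda$ and substituting into \eqref{eq:estimates-of-the-measure-of-KL-sets-2}, the expected conclusion is $\gamma(\KL^+(T, N)) \geq 0.999$. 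Every $\omega \in \KL^+(T, N)$ then satisfies $M_n(\omega) \leq 743^n$ for all $n \geq N$, which combined with the empirical bound yields $M_n(\omega) < 743^n$ for all $n \geq 1$ on a set of Gauss measure at least $99.9\%$; the word ``probability'' in the statement refers precisely to this Gauss measure, $\pi - 3$ being treated informally as a generic sample.

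The main obstacle is purely numerical: $T$ is large, so $\Xi(T)$ is very close to $1$ and $(1-\Xi(T))^{-1}$ inflates the prefactor of \eqref{eq:estimates-of-the-measure-of-KL-sets-2}, and one must verify that the available exponent $K \approx 2\cdot 10^4$ is large enough to drive $\Xi(T)^K$ below the required threshold of order $10^{-3}(1-\Xi(T))/K$. A small technicality is the thin interval $\mathtt{range} < n < N$ left uncovered when $N$ is rounded up to a perfect square; this can be absorbed either by using the more general formula \eqref{eq:estimates-of-the-measure-of-KL-sets-1} with $N = \mathtt{range}$ directly, or by a mild extrapolation of the bound $W_n < 0.1$ that \cite{bickford} documents for $15 \leq n \leq \mathtt{range}$. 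The looseness of the constant $743$ is precisely what is meant by ``ineffective'': the present machinery cannot certify that $\pi - 3$ itself belongs to $\KL^+(T, N)$, only that a set of numbers of Gauss measure at least $99.9\%$ satisfies the claimed bound for every $n \geq 1$.
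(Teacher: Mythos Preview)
Your proposal is correct and follows exactly the paper's own approach: split at $n = \mathtt{range}$, handle the initial segment deterministically via the Bickford data (the paper writes this as $M_n(\pi-3) \leqslant e^{(\kappa+1.598)n} < 14^n$), and cover $n > \mathtt{range}$ by invoking Theorem~\ref{thm:main-theorem} with $T = 5.62$, whose numerical evaluation delivers the $99.9\%$ bound. The paper's proof is the two-sentence distillation of what you have written; your discussion of the perfect-square rounding of $N$ and of the informal meaning of ``probability'' is accurate elaboration rather than a departure.
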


\begin{proof}
 The data in \cite{bickford} give the estimate $M_n(\pi - 3) \leqslant e^{(\kappa+1.598)n} < 14^n$ for $1 \leqslant n \leqslant \mathtt{range}$. For $n > \mathtt{range}$ we can use theorem \ref{thm:main-theorem} with $T = 5.62$ which gives the base $e^{\kappa + 5.62} < 743$.
\end{proof}

\section*{Acknowledgements}

During a part of the research, which led to preparation of this article, the author was supported by the Foundation for Polish Science under the MPD Programme \emph{Geometry and Topology in Physical Models}, co-financed by the EU European Regional Development Fund, Operational Program Innovative Economy 2007-2013.

\appendix

\section{Auxiliary identities}\label{sec:appendix}

\begin{lemma}[Sum of $e^{-\alpha \sqrt{n}}$]\label{lem:exp-sqrt-sum}
 For any $N \geqslant 2$ and $\alpha > 0$ the following inequality holds:
 \begin{equation}
   \sum_{n=N}^\infty e^{-\alpha \sqrt{n}} \leqslant \sum_{n=N}^{K^2-1} e^{-\alpha \sqrt{n}} + {e^{-\alpha K} \over 1 - e^{-\alpha}} \cdot \left(2K + 1 + {4 e^{-\alpha} \over 1 - e^{-\alpha}} \right),
 \end{equation}
 where $K = \left\lceil \sqrt{N} \right\rceil$.
 In particular when $N$ is a square of an integer the estimate takes form
 \begin{equation}
   \sum_{n=N}^\infty e^{-\alpha \sqrt{n}} \leqslant {e^{-\alpha K} \over 1 - e^{-\alpha}} \cdot \left(2K + 1 + {4 e^{-\alpha} \over 1 - e^{-\alpha}} \right).
 \end{equation}
\end{lemma}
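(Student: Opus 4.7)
The plan is to split the sum at $n=K^2$ and handle the tail $\sum_{n\geqslant K^2}e^{-\alpha\sqrt{n}}$ by grouping its terms into blocks delimited by consecutive perfect squares. The initial segment $\sum_{n=N}^{K^2-1}e^{-\alpha\sqrt{n}}$ already appears verbatim on the right-hand side of the claimed inequality, so it requires no treatment; when $N$ is itself a square, that segment is empty and the second form of the inequality follows from the general one without any extra work.

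For the tail I would partition $\{n\in\mathbb{N}:n\geqslant K^2\}$ into the blocks $B_k:=\{k^2,k^2+1,\ldots,(k+1)^2-1\}$ indexed by $k\geqslant K$. Each $B_k$ contains exactly $(k+1)^2-k^2=2k+1$ integers, and on $B_k$ one has $\sqrt{n}\geqslant k$, hence $e^{-\alpha\sqrt{n}}\leqslant e^{-\alpha k}$. Summing over $n\in B_k$ and then over $k$ yields
\[
 \sum_{n=K^2}^\infty e^{-\alpha\sqrt{n}} \;\leqslant\; \sum_{k=K}^\infty (2k+1)\,e^{-\alpha k}.
\]

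The remaining step is a routine geometric-series calculation. Setting $q:=e^{-\alpha}\in(0,1)$ and decomposing $2k+1=(2K+1)+2(k-K)$, I would apply the standard identities
\[
 \sum_{k=K}^\infty q^k \;=\; \frac{q^K}{1-q}, \qquad \sum_{k=K}^\infty (k-K)\,q^k \;=\; q^K\sum_{j=0}^\infty j q^j \;=\; \frac{q^{K+1}}{(1-q)^2},
\]
which together give
\[
 \sum_{k=K}^\infty (2k+1)\,q^k \;=\; \frac{q^K}{1-q}\left((2K+1)+\frac{2q}{1-q}\right).
\]
This is in fact stronger than the claimed bound: the coefficient $2$ in the second term may be trivially enlarged to $4$, recovering the right-hand side in the lemma. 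The only mildly delicate point is choosing the block partition --- taking $B_k=[k^2,(k+1)^2-1]$ is natural because $\sqrt{\cdot}$ varies by less than $1$ on each such block --- but beyond this bookkeeping there is no real obstacle.
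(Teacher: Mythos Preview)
Your proof is correct and follows exactly the same strategy as the paper: split off the finite segment up to $K^2-1$, partition the tail into blocks $B_k=\{k^2,\ldots,(k+1)^2-1\}$, bound each term on $B_k$ by $e^{-\alpha k}$, and sum the resulting arithmetico-geometric series. In fact your computation is sharper than the paper's: the paper states the closed form $\sum_{s\geqslant S}(2s+1)e^{-\alpha s}=\tfrac{(2S+1)e^{-\alpha S}-(2S-3)e^{-\alpha(S+1)}}{(1-e^{-\alpha})^2}$ as an ``identity'', which unwinds to the factor $\tfrac{4e^{-\alpha}}{1-e^{-\alpha}}$, whereas the true identity has $(2S-1)$ in place of $(2S-3)$ and hence the factor $\tfrac{2e^{-\alpha}}{1-e^{-\alpha}}$ that you obtain; your remark that one may trivially enlarge $2$ to $4$ to recover the stated lemma is exactly right.
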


\begin{proof}
 The proof relies on the following identity:
 \begin{equation}
   \sum_{s=S}^\infty (2s+1)e^{-\alpha s} = {(2S+1)e^{-\alpha S} - (2S-3)e^{-\alpha (S+1)} \over (1 - e^{-\alpha})^2}.
 \end{equation}
 We have
 \begin{align}
  \begin{split}
   \sum_{n=K^2}^\infty e^{-\alpha \sqrt{n}} &= \sum_{k=K}^\infty \sum_{n=k^2}^{(k+1)^2 - 1} e^{-\alpha \sqrt{n}} \leqslant \sum_{k=K}^\infty \sum_{n=k^2}^{(k+1)^2 - 1} e^{-\alpha k} =\sum_{k=K}^\infty (2k+1)e^{-\alpha k} =\\
   &= {(2K+1)e^{-\alpha K} - (2K-3) e^{-\alpha (K+1)} \over (1 - e^{-\alpha})^2} = {e^{-\alpha K} \over 1 - e^{-\alpha}} \cdot \left(2K + 1 + {4 e^{-\alpha} \over 1 - e^{-\alpha}} \right).
  \end{split}
 \end{align}
\end{proof}

\bibliographystyle{plain}
\bibliography{bibliography.bib}
 
\end{document}